\DeclareMathOperator*{\grad}{ \text{\textbf{grad}}}
\DeclareMathOperator*{\Grad}{Grad}
\DeclareMathOperator*{\Div}{Div}
\DeclareMathOperator*{\Rot}{Rot}
\renewcommand{\div}{\operatorname{div}}
\DeclareMathOperator*{\curl}{ \text{\textbf{curl}}}
\DeclareMathOperator*{\Curl}{Curl}
\newcommand{\bbR}{\mathbb{R}}
\newcommand{\bbC}{\mathbb{C}}
\newcommand*{\norm}[1]{\ensuremath{\left\|#1\right\|}}
\newcommand{\energy}[1]{\frac{1}{2} \int_{\Omega} \left\{ #1 \right\} \d\Omega}
\DeclareMathOperator*{\ran}{ran}
\newtheorem{definition}{Definition}
\newtheorem{theorem}{Theorem}
\newtheorem{proposition}{Proposition}
\newtheorem{remark}{Remark}
\def\onedot{$\mathsurround0pt\ldotp$}
\def\cddot{
	\mathbin{\vcenter{\baselineskip.67ex
			\hbox{\onedot}\hbox{\onedot}}%
}}
\renewcommand\d[1]{\ensuremath{%
		\;\mathrm{d}#1\@ifnextchar\d{\!}{}}}
\title{Stokes-Dirac structures for distributed parameter port-Hamiltonian systems: an analytical viewpoint}
\author[1]{Andrea Brugnoli}
\author[2]{Ghislain Haine}
\author[2]{Denis Matignon}
\affil[1]{Technische Universit\"at Berlin, Germany}
\affil[2]{ISAE-SUPAERO, Universit\'e{} de Toulouse, France}
\date{}
\begin{document}

\maketitle

\begin{abstract}
In this paper we prove that a large class of linear evolution PDEs defines a Stokes-Dirac structure over Hilbert spaces. To do so, the theory of boundary control system is employed. This definition encompasses problems from mechanics, that cannot be handled by the seminal geometric setting given in [van der Schaft and Maschke, \textit{Hamiltonian formulation of distributed-parameter systems with boundary energy flow, 2002}].
Many worked-out examples stemming from continuum mechanics and physics are presented in detail, and a particular focus is given on the functional spaces in duality at the boundary of the geometrical domain. For each example, the connection between the differential operators and the associated Hilbert complexes is illustrated.

\textbf{Keywords: Port-Hamiltonian system; Stokes-Dirac structure; Boundary control; Partial differential equation; Wave equation; Kirchhoff-Love thin plate; 3D elasticity; Maxwell equations}

\textbf{Mathematics Subject Classification:} 93A30, 35Q74, 35Q61
\end{abstract}

\section{Introduction}

The theory of port-Hamiltonian systems (pHs) is an ever-growing area of research~\cite{Beattie2018,Castanos2013,Duindam2009,Rashad2020,VanDerSchaft1998,VanDerSchaft2013,VanDerSchaft2014}, as it provides a powerful framework for modelling~\cite{Altmann2021,Altmann2017,Califano2022,Cardoso2020,Cardoso2017,Gernandt2021,Macchelli2004b,
Maschke1992,Serhani2019a,Vu2016,Zhou2017}, control~\cite{Macchelli2021,Macchelli2004a,Toledo2020} and simulation~\cite{Brugnoli2019a,Brugnoli2019b,Brugnoli2020PHD,Serhani2020PHD} of complex physical systems. Its versatility allows to describe subsystems independently, and to interconnect them through ports~\cite{Cervera2007,Haine2022,Kurula2010,Ortega2002,VanDerSchaft1999}. It models physical exchanges between subsystems, making use of physically meaningful quantities.\\

The geometric characterization of PHs is not univocal in the literature. PH systems can be defined using two approaches:
\begin{itemize}
    \item calculus of variation for field theories and the jet bundle formalism \cite{schoberl2011first,scholberl2014auto},
    \item Dirac structures \cite{courant1990,Kurula2010,Nishida2004,Yoshimura2006,Jimenez2015}.
\end{itemize}
The jet bundle and Dirac structure formalism are likely connected by a unifying geometrical description of pHs. Nevertheless such a connection is yet to be found in the literature.\\

In the jet bundle case, pHs are obtained like in the classical Hamiltonian formalism of symplectic geometric, i.e. by converting the Euler-Lagrange equations via the Legendre transform. The boundary ports are deduced by accounting for non trivial variations on the boundary. This approach is applicable to finite and infinite dimensional systems. In the latter case the Hamiltonian is a functional over a field and its derivatives, commonly named a jet bundle in field theories. This construction has the advantage of being very structured and in direct association with the Lagrangian formalism. However, deducing boundary ports is a non trivial task in higher order problems~\cite{scholberl2018bd}. \\

Dirac structures generalize Hamiltonian systems defined on symplectic manifolds (basic examples of integrable
 Dirac structures are e.g. Poisson and presymplectic manifolds \cite{courant1990}). They describe the energy routing inside and outside of a given system and are strictly connected with graph and network theory \cite{vanderSchaft2013graph}. Solutions of a pH system belong to the Dirac structure at all time. More specifically, this structure encloses the power balance satisfied by the Hamiltonian along trajectories. Twenty years ago, lumped-parameters pHs have been generalized to distributed-parameters pHs \cite{VanDerSchaft2002}, allowing to model in a structured manner the physical exchanges occurring at the boundary of physical domains. This construction allows easily identifying easily the boundary ports of a given distributed system\footnote{The term Stokes-Dirac structure has been coined in \cite{VanDerSchaft2002}, as the boundary variables are provided by the Stokes theorem.}. However, contrarily to the jet bundle description (that relies on the Lagrangian description and its Legendre transform), it is still unclear how to systematically construct pHs in the Dirac structures framework: an overarching geometric definition of pHs based on Dirac structures is yet to be found in the literature and this is especially true in the infinite dimensional case. Many authors have attempted to provide such a unifying definition, starting either from (Stokes-)Dirac structures \cite{LeGorrec2005,Macchelli2004a} or from physically meaningful examples ~\cite{Jacob2012,Skrepek2021,gaybalmaz2017noneqI,gaybalmaz2017noneqII}. For the moment, no general consensus is found in the literature. In particular, the geometric formulation of problems arising from continuum mechanics is, to the best of our knowledge, open.  \\

The present contribution aims at providing a unifying functional analytic framework for linear pHs defined by means of a (Stokes-)Dirac structures. The presented formulation encompasses many engineering examples and relies on the well-known Boundary Control System (BCS) theory, or more generally on well-posed linear systems~\cite{Curtain1989,Kurula2015,Salamon1987,Staffans2005,Tucsnak2009,Tucsnak2014,Weiss2001} to define (Stokes-)\linebreak{}Dirac structures. Such approach has already been used for this purpose~\cite{Jacob2012,LeGorrec2005,Skrepek2021}. The major novelty compared to previous work is that the algebraic structure is clearly separated from the dynamics satisfied by the trajectories. {This is achieved by assuming a particular decomposition of the operators together with an abstract integration by parts formula. An abstract Stokes-Dirac structure is then constructed by means of an auxiliary BCS. To demonstrate well-posedness, only a subclass of linear constitutive relations are considered (namely for undamped linear port-Hamiltonian systems, also called \emph{lossless} pHs).}   Our framework allows to properly describe examples stemming from continuum mechanics, like general elastodynamics and plate models. In the considered physical examples, we highlight the connection between the operator included in the Dirac structure and the associated Hilbert complexes. This connection is important as it establishes a link between algebraic, topological and geometric properties and has important consequences for discretization \cite{arnold2006acta}.\\

The paper is organized as follows: section~\ref{sec:framework} defines the general framework proposed in this work. It is divided in five parts. Section~\ref{sec:finite} recall some useful definitions for lumped-parameters pHs. Then, the definition of Stokes-Dirac structures is given as the direct generalization to infinite-dimensional pHs in Section~\ref{sec:SD}. The definition of a distributed-parameters pHs is detailed in Section~\ref{sec:dpHs}. Section~\ref{sec:BCS2SD} deals with the auxiliary BCS {enabling} the construction of a Stokes-Dirac structure from differential and boundary operators. The class of lossless linear pHs are proved well-posed in Section~\ref{sec:wplpHs}. Section~\ref{sExamples} gives four meaningful physical examples coming from continuum mechanics and physics. Section~\ref{sec:conclusion} concludes this work.

\section{A general framework}\label{sec:framework}

\subsection{Port-Hamiltonian systems in finite dimension}\label{sec:finite}

A common way to define finite-dimensional port-Hamiltonian systems on $\bbR^N$, borrowed from~\cite{VanDerSchaft2014}, is the port-based modelling, relying on a Dirac structure.

The definition of Dirac structures is given in~\cite[Definition~2.1]{VanDerSchaft2014}, but the equivalent definition given by~\cite[Proposition~2.1]{VanDerSchaft2014} suits better for a generalization to infinite-dimensional spaces.

\begin{definition}[Bond space]
Let $\mathcal{E}$ be a Hilbert space and $\mathcal{F} := \mathcal{E}'$ its topological dual. The space $\mathcal{B} := \mathcal{F} \times \mathcal{E}$ endowed with the bilinear form:
\begin{equation}\label{eq:bond-product}
\left\langle \left\langle
\begin{pmatrix}
f^1 \\ e^1
\end{pmatrix},
\begin{pmatrix}
f^2 \\ e^2
\end{pmatrix}
\right\rangle \right\rangle_{\mathcal{B}}
:=
\left\langle
f^1, e^2
\right\rangle_{\mathcal{F}, \mathcal{E}}
+
\left\langle
f^2, e^1
\right\rangle_{\mathcal{F}, \mathcal{E}}, 
\qquad \forall 
\begin{pmatrix}
f^1 \\ e^1
\end{pmatrix},
\begin{pmatrix}
f^2 \\ e^2
\end{pmatrix} \in \mathcal{B},
\end{equation}
is called a \emph{bond space}. $\mathcal{E}$ is called the \emph{effort space} and $\mathcal{F}$ is called the \emph{flow space}.
\end{definition}

Here $\left\langle
f, e
\right\rangle_{\mathcal{F}, \mathcal{E}} := f(e)$, {\it i.e.} the linear form $f \in (\mathcal{E})'$ applied to the vector $e \in \mathcal{E}$. This notation is classical for general Hilbert spaces, and is known as the duality bracket between $\mathcal{F} := (\mathcal{E})'$ and $\mathcal{E}$. In finite dimension, identification between $(\mathcal{E})'$ and $\mathcal{E}$ is safe since all norms are equivalent. Unfortunately, this is no longer the case in infinite dimension, and continuity of linear maps is norm-dependent, hence a norm has to be chosen and fixed once for all to define the topological dual $(\mathcal{E})'$ of $\mathcal{E}$. 

\begin{definition}[(Stokes-)Dirac structure]
Let $\mathcal{B}$ be a bond space. A subspace $\mathcal{D} \subset \mathcal{B}$ is called a \emph{Dirac or Stokes-Dirac structure} if and only if $\mathcal{D}^{[\perp]} = \mathcal{D}$, where $\mathcal{D}^{[\perp]}$ is the orthogonal companion of $\mathcal{D}$ in $\mathcal{B}$, defined by:
\begin{equation}\label{eq:Stokes-Dirac-Structure}
\mathcal{D}^{[\perp]}
:=
\left\lbrace
\begin{pmatrix}
f^1 \\ e^1
\end{pmatrix} \in \mathcal{B}
\; \mid \;
\left\langle \left\langle
\begin{pmatrix}
f^1 \\ e^1
\end{pmatrix},
\begin{pmatrix}
f^2 \\ e^2
\end{pmatrix}
\right\rangle \right\rangle_{\mathcal{B}} = 0,
\; \forall
\begin{pmatrix}
f^2 \\ e^2
\end{pmatrix} \in \mathcal{D}
\right\rbrace
\end{equation}
\end{definition}

In the real-valued finite-dimensional case, it is common to talk about Dirac structure. In the general framework, we often emphasize the infinite-dimensional setting by talking about Stokes-Dirac structure as it makes use of the so-called Stokes divergence theorem in practice, see {\it e.g.}~\cite{Rashad2020}.

\begin{definition}[Port-Hamiltonian systems~\cite{VanDerSchaft2014}]\label{def:dimFinPHS}
Consider a \emph{solution space}\footnote{Note that this is not the usual {\em state space}, which is usually determined by the ``energy'' norm, of the solutions, but a strict subspace. This terminology follows {\it e.g.}~\cite{Tucsnak2009}.} $\mathcal{Z}$, a resistive space $\mathcal{R}$, a \emph{control space} $\mathcal{U}$, and $\mathcal{H} : \mathcal{Z} \rightarrow \bbR$ a \emph{Hamiltonian} defining energy-storage, function of the energy variable $\alpha$. A port-Hamiltonian system on $\left(\mathcal{Z}, \mathcal{R}, \mathcal{U}\right) \simeq \left( \bbR^{d_s} \times \bbR^{d_r} \times \bbR^{d_u} \right)$ is defined by a Dirac structure:
$$
\mathcal{D} \subset ( \mathcal{Z}' \times \mathcal{R}' \times \mathcal{U}' ) \times ( \mathcal{Z} \times \mathcal{R} \times \mathcal{U} ),
$$
and a dynamics ({\it i.e.} \emph{trajectories} depending on the initial value $\alpha(0)$ and on the control $u(t)$) evolving in this Dirac structure:
$$
\left( \begin{pmatrix}
\dot{\alpha}(t) \\ f_r(t) \\ -y(t)
\end{pmatrix} , \begin{pmatrix}
\grad_{\alpha} \mathcal{H}(\alpha(t)) \\ e_r(t) \\ u(t)
\end{pmatrix} \right) \in \mathcal{C}([0,\infty); \mathcal{D}),
$$
together with a \emph{constitutive relation} for the resistive port $(f_r, e_r) \in \mathcal{S} \subset \mathcal{R}' \times \mathcal{R}$.
\end{definition}

In~\cite[Definition~2.3]{VanDerSchaft2014}, the Dirac structure depends on the energy variables~$\alpha$: the Dirac structure is \emph{modulated}. Modulated Stokes-Dirac structures for distributed port-Hamiltonian system are discussed in  {\it e.g.}~\cite{Califano2022,Cardoso2020}. In the present work, only \emph{constant} Stokes-Dirac structures are considered. 

\begin{proposition}[Power-balance]\label{prop:power-balance-dim-finie}
The Hamiltonian of a port-Hamiltonian system satisfies the following \emph{power-balance} along the trajectories:
\begin{equation}\label{eq:dimFinPB}
\frac{{\rm d}}{{\rm d} t} \mathcal{H}(\alpha(t)) = -\left( f_r, e_r(t) \right)_{\bbR^{d_r}} + \left(y(t), u(t)\right)_{\bbR^{d_u}}, \qquad \forall t\ge0.
\end{equation}
In particular, $\mathcal{H} \in \mathcal{C}^1([0,\infty); \bbR)$.
\end{proposition}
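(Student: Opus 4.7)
The plan is to combine two ingredients: the chain rule applied to $t \mapsto \mathcal{H}(\alpha(t))$, and the fundamental self-orthogonality property implied by the defining relation $\mathcal{D}^{[\perp]} = \mathcal{D}$. First I would note that, since $\mathcal{D} \subset \mathcal{D}^{[\perp]}$, every element $(f,e) \in \mathcal{D}$ paired with itself through the bond product vanishes, i.e.
$$ \left\langle \left\langle \begin{pmatrix} f \\ e \end{pmatrix}, \begin{pmatrix} f \\ e \end{pmatrix} \right\rangle \right\rangle_{\mathcal{B}} = 2 \langle f, e \rangle_{\mathcal{F}, \mathcal{E}} = 0, $$
so that $\langle f, e \rangle_{\mathcal{F}, \mathcal{E}} = 0$ for any $(f,e) \in \mathcal{D}$.

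Next, I would apply this identity to the specific trajectory of Definition~\ref{def:dimFinPHS}, which lies in $\mathcal{D}$ at every time $t\ge 0$. On the product bond space $(\mathcal{Z}'\times \mathcal{R}' \times \mathcal{U}') \times (\mathcal{Z} \times \mathcal{R} \times \mathcal{U})$, the duality bracket decomposes as the sum of the three component duality brackets, all of which reduce in finite dimension to the standard Euclidean inner products. Hence self-orthogonality reads
$$ \left( \dot{\alpha}(t), \grad_{\alpha} \mathcal{H}(\alpha(t)) \right)_{\bbR^{d_s}} + \left( f_r(t), e_r(t) \right)_{\bbR^{d_r}} + \left( -y(t), u(t) \right)_{\bbR^{d_u}} = 0. $$
On the other hand, the chain rule for the smooth map $\mathcal{H}$ gives
$$ \frac{\rm d}{{\rm d}t} \mathcal{H}(\alpha(t)) = \left( \grad_{\alpha} \mathcal{H}(\alpha(t)), \dot{\alpha}(t) \right)_{\bbR^{d_s}}. $$
Substituting the previous identity yields the announced power balance~\eqref{eq:dimFinPB}.

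Finally, to obtain the $\mathcal{C}^1$ regularity, I would use the assumption that the whole trajectory belongs to $\mathcal{C}([0,\infty); \mathcal{D})$: in particular, both $\dot{\alpha}$ and $\grad_{\alpha} \mathcal{H}(\alpha(\cdot))$ are continuous, so the right-hand side of the power balance, equal to $\tfrac{\rm d}{{\rm d}t}\mathcal{H}(\alpha(t))$, is continuous in $t$. I do not expect a real obstacle here: the finite-dimensional setting makes the chain rule immediate, and the main conceptual point is simply the self-orthogonality of a Dirac structure with respect to its own bond product. The only slightly subtle step is ensuring that the decomposition of the duality bracket across the three factor spaces is done consistently with the sign convention $(-y, u)$, which is precisely what produces the minus sign in front of the resistive pairing and the plus sign in front of the control/output pairing.
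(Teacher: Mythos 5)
Your proof is correct and is exactly the standard argument the paper intends (the paper leaves this proposition unproved, deferring to the folklore computation): isotropy of the Dirac structure gives $\left\langle f, e \right\rangle_{\mathcal{F},\mathcal{E}} = 0$ along the trajectory, the duality bracket splits over the three ports, and the chain rule identifies the storage term with $\tfrac{\rm d}{{\rm d}t}\mathcal{H}(\alpha(t))$, while continuity of the trajectory in $\mathcal{D}$ yields the $\mathcal{C}^1$ regularity. No gaps.
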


In practice, the constitutive relation $\mathcal{S}$ is given and the term $\left( f_r, e_r(t) \right)_{\bbR^{d_r}}$ leads to a fully determined power-balance~\eqref{eq:dimFinPB}.
\begin{proposition}[Extended structure matrix]\label{prop:extended-finite-dim}
Consider a port-Hamiltonian system and assume that the trajectories are solutions of the following system:
\begin{equation}\label{eq:dimFinPHSstateSpace}
\begin{pmatrix} \dot \alpha(t) \\ f_r(t) \end{pmatrix} = J \begin{pmatrix} \grad_{\alpha} \mathcal{H}(\alpha(t)) \\ e_r(t) \end{pmatrix} + B u(t), \quad y(t) = B^\top \begin{pmatrix} \grad_{\alpha} \mathcal{H}(\alpha(t)) \\ e_r(t) \end{pmatrix},
\end{equation}
where $J$ is a skew-symmetric matrix and $B$ a control matrix, with appropriate sizes.

Then the Dirac structure is given as the graph\footnote{Rigorously speaking, the Dirac structure is given by the graph of the \textit{inverse} $\mathcal{J}^{-1}$ of the extended structure matrix since it is regarded in $\mathcal{F} \times \mathcal{E}$ and not in $\mathcal{E} \times \mathcal{F}$. Nevertheless, to avoid mentioning details on the \emph{inverse}, by language abuse, we will only say {\em graph} throughout this paper, since it perfectly fits the definition of~\cite[Exercise~1, p~17]{VanDerSchaft2014}.} of the \emph{extended structure matrix}:
$$
\mathcal{J} := \begin{bmatrix}
J & B \\ -B^\top & 0
\end{bmatrix}.
$$
\end{proposition}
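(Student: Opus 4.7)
The plan is to work in the finite-dimensional bond space $\mathcal{B} = (\mathcal{Z}'\times\mathcal{R}'\times\mathcal{U}')\times(\mathcal{Z}\times\mathcal{R}\times\mathcal{U})$, identify duals with their primals via the standard Euclidean inner product, and collect the port triples into a single flow $\tilde f := (\dot\alpha, f_r, -y)$ and effort $\tilde e := (\grad_\alpha \mathcal{H}, e_r, u)$. With this notation, system~\eqref{eq:dimFinPHSstateSpace} is exactly the statement that $\tilde f = \mathcal{J}\tilde e$, so the trajectories lie in the graph $\mathcal{G}(\mathcal{J}) := \{(\tilde f,\tilde e)\in\mathcal{B} \mid \tilde f = \mathcal{J}\tilde e\}$. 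The proposition will follow once we show that $\mathcal{G}(\mathcal{J})$ is a Dirac structure, i.e. $\mathcal{G}(\mathcal{J})^{[\perp]}=\mathcal{G}(\mathcal{J})$.

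The first step is the algebraic fact that $\mathcal{J}$ is skew-symmetric: since $J^\top = -J$, a direct block computation yields
$$
\mathcal{J}^\top = \begin{bmatrix} J^\top & -B \\ B^\top & 0 \end{bmatrix} = \begin{bmatrix} -J & -B \\ B^\top & 0 \end{bmatrix} = -\mathcal{J}.
$$
Here the sign in the $(2,1)$ block of $\mathcal{J}$ is precisely what matches the $-y$ placed in the flow component, which is the only subtlety one must track carefully.

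Next, I would verify isotropy, $\mathcal{G}(\mathcal{J})\subset\mathcal{G}(\mathcal{J})^{[\perp]}$, by a one-line computation using~\eqref{eq:bond-product}: for any two pairs $(\tilde f^i,\tilde e^i) = (\mathcal{J}\tilde e^i,\tilde e^i)$, $i=1,2$, skew-symmetry gives
$$
\langle \mathcal{J}\tilde e^1,\tilde e^2\rangle + \langle \mathcal{J}\tilde e^2,\tilde e^1\rangle = \langle \mathcal{J}\tilde e^1,\tilde e^2\rangle - \langle \tilde e^1,\mathcal{J}\tilde e^2\rangle^\top = 0,
$$
so the bond pairing vanishes.

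Finally, for the reverse inclusion $\mathcal{G}(\mathcal{J})^{[\perp]}\subset\mathcal{G}(\mathcal{J})$, take an arbitrary $(\tilde f,\tilde e)\in\mathcal{G}(\mathcal{J})^{[\perp]}$ and test it against all elements $(\mathcal{J}\tilde e',\tilde e')$ with $\tilde e'$ ranging freely over $\mathcal{Z}\times\mathcal{R}\times\mathcal{U}$; using skew-symmetry once more, the orthogonality condition rearranges to $\langle \tilde f - \mathcal{J}\tilde e,\tilde e'\rangle = 0$ for every $\tilde e'$, hence $\tilde f = \mathcal{J}\tilde e$. The only real obstacle is bookkeeping: keeping the sign conventions on $-y$ and on the $-B^\top$ block consistent throughout the pairing computation; everything else is routine linear algebra, and no dimension count is required since non-degeneracy of the Euclidean pairing yields the reverse inclusion directly.
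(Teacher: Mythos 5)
Your proof is correct. The paper itself gives no argument for this proposition beyond citing an exercise in van der Schaft's lecture notes, and your route --- rewriting~\eqref{eq:dimFinPHSstateSpace} as $\tilde f = \mathcal{J}\tilde e$, checking $\mathcal{J}^\top = -\mathcal{J}$, and establishing $\mathcal{G}(\mathcal{J})^{[\perp]} = \mathcal{G}(\mathcal{J})$ by isotropy plus non-degeneracy of the pairing --- is exactly the finite-dimensional real specialization of the paper's own proof of Theorem~\ref{th:Stokes-Dirac}, so it takes essentially the intended approach.
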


\begin{proof}
\cite[Exercise~1, p~17]{VanDerSchaft2014}.
\end{proof}

\subsection{Stokes-Dirac structure over complex Hilbert spaces}\label{sec:SD}

This section aims at providing a sufficient condition for an operator on complex Hilbert spaces to generate a Stokes-Dirac structure as its graph.
The definition of Bond space extends from finite to infinite dimension. However, for an infinite-dimensional system, it is compulsory to consider complex spaces and to consider a weaker topology on the flow space $\mathcal{F} := \mathcal{E}'$, as will be seen in the sequel. This dictates the following \emph{complex}\footnote{The topological dual $\mathcal{F}$ of $\mathcal{E}$ being the vector space of continuous \emph{linear} forms on $\mathcal{E}$, $\langle f, e \rangle_{\mathcal{F},\mathcal{E}} := f(e)$ is antilinear in its first variable, and linear in its second.} definition of a \emph{sesquilinear form} as bond product on $\mathcal{B} := \mathcal{E}' \times \mathcal{E}$:
$$
\left\langle \left\langle
\begin{pmatrix}
f^1 \\ e^1
\end{pmatrix},
\begin{pmatrix}
f^2 \\ e^2
\end{pmatrix}
\right\rangle \right\rangle_{\mathcal{B}}
:=
\overline{
\left\langle
f^1, e^2
\right\rangle_{\mathcal{F}, \mathcal{E}}
}
+
\left\langle
f^2, e^1
\right\rangle_{\mathcal{F}, \mathcal{E}}, 
\qquad \forall 
\begin{pmatrix}
f^1 \\ e^1
\end{pmatrix},
\begin{pmatrix}
f^2 \\ e^2
\end{pmatrix} \in \mathcal{B},
$$

\begin{theorem}\label{th:Stokes-Dirac}
Let $\mathcal{E}$ be a Hilbert space, $\mathcal{F} = \mathcal{E}'$ its topological dual, and $\mathcal{J} \in \mathcal{L}(\mathcal{E}, \mathcal{F})$. If~\footnote{This identity is a skew-symmetry\emph{-like} property of $\mathcal{J}$. The classical skew-symmetry would {require} that $\mathcal{J}$ has its range in $\mathcal{E}$, and to make use of the Hermitian product of $\mathcal{E}$ instead of the duality bracket between $\mathcal{F}$ and $\mathcal{E}$.}:
\begin{equation}\label{eq:skew-sym}
\left\langle
\mathcal{J} e^1, e^2
\right\rangle_{\mathcal{F}, \mathcal{E}}
= -
\overline{
\left\langle
\mathcal{J} e^2, e^1
\right\rangle_{\mathcal{F}, \mathcal{E}}},
\qquad \forall e^1, e^2 \in \mathcal{E},
\end{equation}
then:
$$
\mathcal{D} := {\rm Graph} (\mathcal{J}) 
:= 
\left\lbrace 
\begin{pmatrix}
\mathcal{J} e \\ e
\end{pmatrix} \in \mathcal{B}
\; \mid \;
\forall e \in \mathcal{E}
\right\rbrace,
$$
is a Stokes-Dirac structure in $\mathcal{B} := \mathcal{F} \times \mathcal{E}$.

The operator $\mathcal{J}$ is called the \emph{extended structure operator} of $\mathcal{D}$.

\end{theorem}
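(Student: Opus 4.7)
The plan is to prove that $\mathcal{D} = \mathcal{D}^{[\perp]}$ by establishing the two set inclusions separately, using only elementary manipulations of the sesquilinear bond product together with the skew-symmetry-like identity~\eqref{eq:skew-sym}. Since $\mathcal{J} \in \mathcal{L}(\mathcal{E},\mathcal{F})$ is everywhere defined, no domain issues will arise, which simplifies the argument compared to the (struck-out) version with a closed densely defined operator.

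For the inclusion $\mathcal{D} \subset \mathcal{D}^{[\perp]}$, I would pick two arbitrary elements $(\mathcal{J}e^1, e^1)$ and $(\mathcal{J}e^2, e^2)$ of $\mathcal{D}$, plug them into the bond product, and obtain
\[
\overline{\langle \mathcal{J}e^1, e^2 \rangle_{\mathcal{F},\mathcal{E}}} + \langle \mathcal{J}e^2, e^1 \rangle_{\mathcal{F},\mathcal{E}}.
\]
Applying~\eqref{eq:skew-sym} to the second term (writing $\langle \mathcal{J}e^2, e^1\rangle = -\overline{\langle \mathcal{J}e^1, e^2 \rangle}$) makes the two contributions cancel. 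This shows every element of $\mathcal{D}$ is bond-orthogonal to all of $\mathcal{D}$, hence belongs to $\mathcal{D}^{[\perp]}$.

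For the reverse inclusion $\mathcal{D}^{[\perp]} \subset \mathcal{D}$, I would take an arbitrary $(f^1, e^1) \in \mathcal{D}^{[\perp]}$ and use the defining orthogonality against every test element $(\mathcal{J}e, e)$ with $e \in \mathcal{E}$:
\[
\overline{\langle f^1, e \rangle_{\mathcal{F},\mathcal{E}}} + \langle \mathcal{J}e, e^1 \rangle_{\mathcal{F},\mathcal{E}} = 0, \qquad \forall e \in \mathcal{E}.
\]
Rewriting the second term via~\eqref{eq:skew-sym} as $-\overline{\langle \mathcal{J}e^1, e \rangle_{\mathcal{F},\mathcal{E}}}$ and taking the complex conjugate of the whole equation yields $\langle f^1 - \mathcal{J}e^1, e\rangle_{\mathcal{F},\mathcal{E}} = 0$ for every $e \in \mathcal{E}$. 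By the very definition of the topological dual $\mathcal{F}$, this forces $f^1 = \mathcal{J}e^1$ in $\mathcal{F}$, so $(f^1, e^1) = (\mathcal{J}e^1, e^1) \in \mathcal{D}$.

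The proof is essentially bookkeeping; the only genuinely delicate point is handling the conjugation in the sesquilinear bond product correctly, since~\eqref{eq:skew-sym} itself carries a conjugate on the right-hand side. The main thing I would double-check is that the two conjugates compose in such a way that the cancellation in the first inclusion, and the reduction to a linear functional identity in the second, both go through without extra sign or conjugation errors.
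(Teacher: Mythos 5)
Your proof is correct and follows essentially the same route as the paper's: the forward inclusion by direct cancellation via~\eqref{eq:skew-sym}, and the reverse inclusion by testing against all $(\mathcal{J}e, e)$ and concluding $f^1 - \mathcal{J}e^1$ annihilates $\mathcal{E}$, hence vanishes in $\mathcal{F}$. The conjugation bookkeeping you flag does go through exactly as you describe, since $\langle \mathcal{J}e^2, e^1\rangle_{\mathcal{F},\mathcal{E}} = -\overline{\langle \mathcal{J}e^1, e^2\rangle_{\mathcal{F},\mathcal{E}}}$ makes the two terms of the bond product cancel.
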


\begin{proof}
Let $\begin{pmatrix}
f^1 \\ e^1
\end{pmatrix} \in \mathcal{D}$.
Then for all $\begin{pmatrix}
f^2 \\ e^2
\end{pmatrix} \in \mathcal{D}$, one has:
\begin{equation*}
\begin{array}{rcl}
\displaystyle \left\langle \left\langle \begin{pmatrix} f^1 \\ e^1 \end{pmatrix}, \begin{pmatrix} f^2 \\ e^2 \end{pmatrix} \right\rangle \right\rangle_{\mathcal{B}} 
&=& \displaystyle \overline{\left\langle f^1, e^2 \right\rangle_{\mathcal{F}, \mathcal{E}}} + \left\langle f^2, e^1 \right\rangle_{\mathcal{F}, \mathcal{E}}, \\
&=& \displaystyle \overline{\left\langle \mathcal{J} e^1, e^2 \right\rangle_{\mathcal{F}, \mathcal{E}}} + \left\langle \mathcal{J} e^2, e^1 \right\rangle_{\mathcal{F}, \mathcal{E}}, \\
&\overset{\eqref{eq:skew-sym}}{=}& \displaystyle \overline{\left\langle \mathcal{J} e^1, e^2 \right\rangle_{\mathcal{F}, \mathcal{E}}} - \overline{\left\langle \mathcal{J} e^1, e^2 \right\rangle_{\mathcal{F}, \mathcal{E}}}, \\
&=& \displaystyle 0.
\end{array}
\end{equation*}
This shows that $\mathcal{D} \subset \mathcal{D}^{[\perp]}$, {\it i.e.} that $\mathcal{D}$ is a \emph{Tellegen structure}.

Reciprocally, let $\begin{pmatrix} f^1 \\ e^1 \end{pmatrix} \in \mathcal{D}^{[\perp]}$. Then for all $\begin{pmatrix} f^2 \\ e^2 \end{pmatrix} \in \mathcal{D}$, one has:
\begin{equation*}
\begin{array}{rcl}
\displaystyle 0 
&=& \displaystyle \left\langle \left\langle \begin{pmatrix} f^1 \\ e^1 \end{pmatrix}, \begin{pmatrix} f^2 \\ e^2 \end{pmatrix} \right\rangle \right\rangle_{\mathcal{B}} \\
&=& \displaystyle \overline{\left\langle f^1, e^2 \right\rangle_{\mathcal{F}, \mathcal{E}}} + \left\langle \mathcal{J} e^2, e^1 \right\rangle_{\mathcal{F}, \mathcal{E}}, \\
&=& \displaystyle \overline{\left\langle f^1, e^2 \right\rangle_{\mathcal{F}, \mathcal{E}}} - \overline{\left\langle \mathcal{J} e^1, e^2 \right\rangle_{\mathcal{F}, \mathcal{E}}}.
\end{array}
\end{equation*}
This is true for all $e_2 \in \mathcal{E}$, hence $f^1 - \mathcal{J} e_1 \in \mathcal{E}^\perp := \left\{ f \in \mathcal{F} \, \mid \, \left\langle f, e \right\rangle_{\mathcal{F}, \mathcal{E}} = 0 \text{ for all } e \in \mathcal{E} \right\} \equiv \{ 0 \}$, {\it i.e.} $\begin{pmatrix} f^1 \\ e^1 \end{pmatrix} \in \mathcal{D}$, which concludes the proof.

\end{proof}

\begin{remark}
Let us consider the following example to fix the ideas: $\mathcal{J} := \frac{\rm d}{\rm d x}$ defined from the Sobolev space $H^1_0(0,1)$ to $L^2(0,1)$. There is two ways for considering this operator: either as a closed and densely-defined {\em unbounded} operator from $L^2(0,1)$ to $L^2(0,1)$, or as a {\em bounded} operator from $H^1_0(0,1)$ to $L^2(0,1)$. If one takes $\mathcal{E} := L^2(0,1) = \mathcal{F}$, the graph of $\mathcal{J}$ would only be a Tellegen structure on $\mathcal{F} \times \mathcal{E} = L^2(0,1) \times L^2(0,1)$. Indeed, the reciprocal part of the above proof would require $\mathcal{J}$ to be {\em skew-adjoint} to hold, which would be too restrictive for our purpose. On the other hand, if one takes $\mathcal{E} := H^1_0(0,1)$ and $\mathcal{F} := (H^1_0(0,1))' \simeq H^{-1}(0,1) \supset L^2(0,1)$, one obtains a Dirac structure on $\mathcal{F} \times \mathcal{E} = H^{-1}(0,1) \times H^1_0(0,1)$ as expected. The price to pay is the weaker topology on the flow space $\mathcal{F}$, and the stronger one on the effort space $\mathcal{E}$.
\end{remark}

\begin{remark}
Equivalently, the \emph{skew-symmetric-like} property~\eqref{eq:skew-sym} can be rewritten as follows:
$$
\Re {\rm e} \left\langle
\mathcal{J} e, e
\right\rangle_{\mathcal{F}, \mathcal{E}}
= 0,
\qquad \forall e \in \mathcal{E}.
$$
\end{remark}

\begin{remark}
Theorem~\ref{th:Stokes-Dirac} gives a \emph{kernel representation} of the infinite-dimensional Stokes-Dirac structure $\mathcal{D}$, as defined in~\cite[Chapter~5]{VanDerSchaft2014} for finite-dimensional port-Hamiltonian systems.
\end{remark}

\subsection{Port-Hamiltonian systems on Hilbert spaces}\label{sec:dpHs}

Assuming $\mathcal{Z}$, $\mathcal{R}$ and $\mathcal{U}$ to be Hilbert spaces, Definition~\ref{def:dimFinPHS} directly translates to the infinite-dimensional setting, providing the gradient of the Hamiltonian $\grad_{\alpha} \mathcal{H}$ is replaced by the variational derivate $\delta_{\alpha} \mathcal{H}$, whose definition can be directly extended to our framework following {\it e.g.}~\cite[Definition~4.1,~p.~245]{Olver93}. In particular, Proposition~\ref{prop:power-balance-dim-finie} admits a straightforward generalization.
\begin{proposition}[Power-balance]\label{prop:power-balance-dim-inf}
Let $\mathcal{Z}$, $\mathcal{R}$, and $\mathcal{U}$ be three Hilbert spaces, and a functional $\mathcal{H} : \mathcal{Z} \rightarrow \bbR$ a Hamiltonian, function of the energy variable $\alpha$, defining energy-storage.

Consider a port-Hamiltonian system on $\left(\mathcal{Z},\mathcal{R},\mathcal{U}\right)$ defined by a Stokes-Dirac structure:
$$
\mathcal{D} \subset ( \mathcal{Z}' \times \mathcal{R}' \times \mathcal{U}' ) \times ( \mathcal{Z} \times \mathcal{R} \times \mathcal{U} ),
$$
and trajectories (depending on the initial value $\alpha(0)$ and on the control $u$):
$$
\left( \begin{pmatrix}
\dot{\alpha}(t) \\ f_r(t) \\ -y(t)
\end{pmatrix} , \begin{pmatrix}
\delta_{\alpha} \mathcal{H}(\alpha(t)) \\ e_r(t) \\ u(t)
\end{pmatrix} \right) \in \mathcal{C}([0,\infty); \mathcal{D}),
$$
together with a \emph{{resistive} constitutive relation} for the resistive port $(f_r, e_r) \in \mathcal{S} \subset \mathcal{R}' \times \mathcal{R}$.

Then the Hamiltonian $\mathcal{H}(\alpha(t)) \in \mathcal{C}^1([0,\infty);\bbR)$ satisfies the following \emph{power-balance} along the trajectories:
\begin{equation}\label{eq:dimInfPB}
\frac{{\rm d}}{{\rm d} t} \mathcal{H}(\alpha(t)) = - \Re {\rm e}  \left\langle f_r(t), e_r(t) \right\rangle_{\mathcal{R}',\mathcal{R}} + \Re {\rm e} \left\langle y(t), u(t) \right\rangle_{\mathcal{U}',\mathcal{U}}, \qquad \forall t\ge0.
\end{equation}
\end{proposition}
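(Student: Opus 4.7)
The plan is to combine two ingredients: a chain-rule identity for the Hamiltonian along the trajectory, expressed via the variational derivative, and the algebraic identity that any element of a Stokes-Dirac structure is self-orthogonal for the bond product. Both together will force the power balance, and the regularity of the terms appearing on the right-hand side will in turn yield the $\mathcal{C}^1$ claim.

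First, I would unfold the definition of the variational derivative $\delta_\alpha \mathcal{H}$ to write, for any curve $\alpha \in \mathcal{C}^1([0,\infty);\mathcal{Z})$,
\begin{equation*}
\frac{{\rm d}}{{\rm d} t} \mathcal{H}(\alpha(t)) = \Re {\rm e} \left\langle \dot{\alpha}(t),\, \delta_\alpha \mathcal{H}(\alpha(t)) \right\rangle_{\mathcal{Z}',\mathcal{Z}},
\end{equation*}
which is the abstract chain rule following~\cite[Definition~4.1]{Olver93} adapted to complex Hilbert spaces (the real part appearing because $\mathcal{H}$ is $\mathbb{R}$-valued and the duality bracket is sesquilinear).

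Next I would exploit the hypothesis that the trajectory lies in $\mathcal{D}$. Since $\mathcal{D}$ is a Stokes-Dirac structure, $\mathcal{D} \subset \mathcal{D}^{[\perp]}$, so pairing the trajectory element with itself via the bond product gives zero. Computing this self-pairing with the sesquilinear bond product on $(\mathcal{Z}'\times \mathcal{R}' \times \mathcal{U}')\times(\mathcal{Z}\times \mathcal{R}\times\mathcal{U})$ produces
\begin{equation*}
0 = 2 \, \Re {\rm e} \left[ \left\langle \dot\alpha(t), \delta_\alpha\mathcal{H}(\alpha(t))\right\rangle_{\mathcal{Z}',\mathcal{Z}} + \left\langle f_r(t), e_r(t) \right\rangle_{\mathcal{R}',\mathcal{R}} - \left\langle y(t), u(t)\right\rangle_{\mathcal{U}',\mathcal{U}}\right],
\end{equation*}
and substituting the chain-rule identity into this relation immediately yields~\eqref{eq:dimInfPB}.

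Finally, for the $\mathcal{C}^1$ regularity, I would argue that the right-hand side of~\eqref{eq:dimInfPB} is continuous in $t$: the continuity of the trajectory in $\mathcal{D}$ (equipped with the product topology of $\mathcal{B}$) implies that $f_r$, $e_r$, $y$ and $u$ are each continuous in their respective spaces, so both duality brackets on the right are continuous functions of $t$. Hence $\mathcal{H}(\alpha(\cdot))$, being the primitive of a continuous function, belongs to $\mathcal{C}^1([0,\infty);\mathbb{R})$. The main subtlety is the chain-rule identity itself: it is classical in finite dimension but in the infinite-dimensional setting requires that $\delta_\alpha\mathcal{H}$ be interpreted as the Fréchet (or at least Gâteaux, with appropriate continuity) derivative in the duality between $\mathcal{Z}'$ and $\mathcal{Z}$, and that $\dot\alpha$ be well-defined as an element of $\mathcal{Z}'$ along the trajectory---both of which are built into the definition of a distributed-parameter pHs given in the preceding discussion. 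Once this is granted, the rest is a direct algebraic manipulation of the Stokes-Dirac property.
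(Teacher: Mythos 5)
Your proof is correct and follows exactly the argument the paper intends (the proposition is stated as a ``straightforward generalization'' of the finite-dimensional power balance, with the proof left implicit): the self-pairing of the trajectory in the sesquilinear bond product vanishes because $\mathcal{D}\subset\mathcal{D}^{[\perp]}$, giving $2\,\Re{\rm e}\bigl[\langle\dot\alpha,\delta_\alpha\mathcal{H}\rangle+\langle f_r,e_r\rangle-\langle y,u\rangle\bigr]=0$, and the chain rule for the variational derivative identifies the first term with $\frac{{\rm d}}{{\rm d}t}\mathcal{H}(\alpha(t))$. Your closing remarks on the regularity of the right-hand side and on the interpretation of $\dot\alpha\in\mathcal{Z}'$ correctly identify the only points needing care.
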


\begin{remark}
As in the finite-dimensional setting~\eqref{eq:dimFinPB}, the resistive constitutive relation is needed to relate $f_r$ and $e_r$, and conclude to the final power-balance. This supplementary constitutive relation often models a \emph{dissipation} through a proportional law (such that Ohm's law, Fourier's law, etc.). Indeed, assume that $\mathcal{R}' \simeq \mathcal{R}$ and there exists $S \in \mathcal{L}(\mathcal{R}, \mathcal{R})$, such that $e_r = S f_r$, where $S$ is symmetric and positive: $S^\star=S$ and $\left( f_r, S f_r \right)_{\mathcal{R}} \ge 0$ for all $f_r \in \mathcal{R}$, then $\Re {\rm e} \left\langle f_r(t), e_r(t) \right\rangle_{\mathcal{R}', \mathcal{R}} = \left( f_r(t), S f_r(t) \right)_{\mathcal{R}} \ge 0$ and the power-balance then reads:
$$
\frac{\rm d}{{\rm d} t} \mathcal{H}(\alpha(t))
= - \left( f_r(t), S f_r(t) \right)_{\mathcal{R}} 
+ \Re {\rm e} \left\langle y(t), u(t) \right\rangle_{\mathcal{U}', \mathcal{U}}
\le \Re {\rm e} \left\langle y(t), u(t) \right\rangle_{\mathcal{U}', \mathcal{U}}, \qquad \forall t \ge 0,
$$
which stands for \emph{lossy} port-Hamiltonian systems.
\end{remark}

\subsection{Formal skew-symmetry with boundary control and structure operator}\label{sec:BCS2SD}

This section is devoted to the description of a class of operators generating a Stokes-Dirac structure as its graph thanks to Theorem~\ref{th:Stokes-Dirac}. The aim is to obtain an infinite-dimensional counterpart of Proposition~\ref{prop:extended-finite-dim}, namely Theorem~\ref{th:BCS2SD}.

As starting point, Boundary Control Systems (BCS) are considered as infinite-dimensional analogous of systems of the form~\eqref{eq:dimFinPHSstateSpace}.

More precisely, we consider system of the form: 
\begin{equation}\label{eq:dimInfPHSstateSpace}
\begin{pmatrix} \dot \alpha(t) \\ f_r(t) \end{pmatrix} = J \begin{pmatrix} \delta_{\alpha} \mathcal{H}(\alpha(t)) \\ e_r(t) \end{pmatrix},
\qquad
G \, \delta_{\alpha} \mathcal{H}(t) = u(t),
\end{equation}

where $J$ is \emph{formally} skew-symmetric and $G$ a boundary control operator. The output will be defined accordingly to $G$ in order to obtain the desired Stokes-Dirac structure.


An important point to keep in mind for this section is that the focus is {set} on the Stokes-Dirac structure, and not on the trajectories of a port-Hamiltonian system. Therefore there is no need to distinguish the solution space $\mathcal{Z}$ from the resistive space $\mathcal{R}$, neither to consider the time evolution.

Furthermore, many examples coming from physics, as will be seen in Section~\ref{sExamples}, lead us to introduce some notations, and  propose the following assumptions:
\begin{itemize}
\item[\textbf{(A1)}]
$\mathcal{X}^1$ and $\mathcal{X}^2$ are two Hilbert spaces, identified with their respective duals;
\item[\textbf{(A2)}]
$J$ can be decomposed as $\begin{bmatrix} 0 & - K \\ L & 0 \end{bmatrix}$ on $\mathcal{X} := \mathcal{X}^1 \times \mathcal{X}^2$;
\item[\textbf{(A3)}]
$L$ is a closed and densely defined operator from $\mathcal{X}^1$ into $\mathcal{X}^2$, with domain $\mathcal{Z}^1$. Endowed with the graph norm, $\mathcal{Z}^1$ is a Hilbert space, continuously and densely embedded in $\mathcal{X}^1$ \cite[Section~2.2]{Tucsnak2009}, and $L \in \mathcal{L}(\mathcal{Z}^2,\mathcal{X}^1)$; and $K$ is a closed and densely defined operator from $\mathcal{X}^2$ into $\mathcal{X}^1$, with domain $\mathcal{Z}^2$. The Hilbert space $\mathcal{Z}^2$ is also endowed with the graph norm, and $K \in \mathcal{L}(\mathcal{Z}^1,\mathcal{X}^2)$. Furthermore, their domains satisfy $\mathcal{Z}^1 \times \mathcal{Z}^2 = \mathcal{Z} \times \mathcal{R}$;
\item[\textbf{(A4)}]
$G$ can be decomposed as $\begin{bmatrix} \gamma^1 & 0 \\ 0 & \gamma^2 \end{bmatrix} \in \mathcal{L}(\mathcal{Z}^1\times\mathcal{Z}^2, \mathcal{U}^1\times\mathcal{U}^2)$, with $\mathcal{U}^1$ and $\mathcal{U}^2$ two other Hilbert spaces satisfying $\mathcal{U} = \mathcal{U}^1 \times \mathcal{U}^2$.
\end{itemize}

Figure~\ref{fig:decomposition-J} shows the interactions between the different spaces of our setting.
\begin{figure}
\centering
\begin{tikzpicture}[scale=0.7]
\draw (-6,2) node{$\mathcal{U}^2$};
\draw[->>, very thick] (-2.5,2) -- (-5.5,2);
\draw (-4,2) node[below]{$\gamma^2$};
\draw (-2,2) node{$\mathcal{Z}^2$};
\draw[-latex, very thick] (-1.5,2) -- (1.5,2);
\draw (0,2) node[below]{$K$};
\draw (2,2) node{$\mathcal{X}^1$};

\draw (-2,-2) node{$\mathcal{X}^2$};
\draw[-latex, very thick] (1.5,-2) -- (-1.5,-2);
\draw (0,-2) node[above]{$L$};
\draw (2,-2) node{$\mathcal{Z}^1$};
\draw[->>, very thick] (2.5,-2) -- (5.5,-2);
\draw (4,-2) node[above]{$\gamma^1$};
\draw (6,-2) node{$\mathcal{U}^1$};

\draw[left hook-latex, dashed, very thick] (-2,1) -- (-2,-1);
\draw[left hook-latex, dashed, very thick] (2,-1) -- (2,1);
\end{tikzpicture}
\caption{Relations between the spaces and the continuous linear operators. Each arrow represents an operator, a doubled-headed arrow means that it is surjective, and the hooked and dashed arrows mean dense injections.}\label{fig:decomposition-J}
\end{figure}

Finally, it has been assumed in system~\eqref{eq:dimInfPHSstateSpace} that $J$ is \emph{formally} skew-symmetric, which translates with the above decomposition of $J$ by $L$ and $K$ being \emph{formal adjoints with respect to $G$}:
$$
\left( L e^1, e^2 \right)_{\mathcal{X}^2} = \left( e^1, K e^2 \right)_{\mathcal{X}^1},
\qquad \forall \begin{pmatrix} e^1 \\ e^2 \end{pmatrix} \in \ker G.
$$
This identity can be seen as an abstract formulation of the usual definition of \emph{formal adjoints}, often encountered in the port-Hamiltonian formalism for differential operators, and using $\mathcal{C}^\infty_0$ test functions, see \cite[Def. 5.80]{renardy2006introduction}.

{In the framework of this paper, a slightly more general assumption is being made:
\begin{itemize}
\item[\textbf{(A5)}] There exists two operators $\beta^1 \in \mathcal{L}(\mathcal{Z}^1,(\mathcal{U}^2)')$ and $\beta^2 \in \mathcal{L}(\mathcal{Z}^2,(\mathcal{U}^1)')$ such that the following abstract Green's identity holds:
\end{itemize}
\begin{equation}\label{eq:formal-adjoint}
\left( L e^1, e^2 \right)_{\mathcal{X}^2} = \left( e^1, K e^2 \right)_{\mathcal{X}^1}
{+ \left\langle \gamma^1 e^1, \beta^2 e^2 \right\rangle_{\mathcal{U}^1,(\mathcal{U}^1)'}
+ \left\langle \beta^1 e^1, \gamma^2 e^2 \right\rangle_{(\mathcal{U}^2)',\mathcal{U}^2}},
\quad \forall \begin{pmatrix} e^1 \\ e^2 \end{pmatrix} \in {\mathcal{Z}^1 \times \mathcal{Z}^2}.
\end{equation}}
\noindent
{It is clear that this abstract Green's identity implies that $L$ and $K$ are formal adjoints with respect to $G$.}

{\begin{remark}
    The abstract integration by parts formula  \eqref{eq:formal-adjoint} has an important connection with differential geometry.
    When dealing with the de Rham complex, it corresponds to the topological integration by parts formula of differential forms \cite[Eq. 2.4]{arnold2006acta}. When the elasticity complex is considered the corresponding formula is based on the exterior covariant derivative (see for instance \cite[Eq. 32]{rashad2023intrinsic}).
\end{remark}}
To retrieve the notations of the previous sections, we may {\it e.g.} consider either $\mathcal{Z} = \mathcal{Z}^1 \times \mathcal{Z}^2$ and $\mathcal{R} = \emptyset$, or $\mathcal{Z} = \mathcal{Z}^1$ and $\mathcal{R} = \mathcal{Z}^2$. The former will be our setting for well-posedness (see Theorem~\ref{th:well-posed}) and most examples treated in Section~\ref{sExamples}. An example with $\mathcal{R}$ being neither $\emptyset$ nor $\mathcal{Z}^2$ is provided in Section~\ref{s-curl3D}.

{
\begin{remark}
In this work, an abstract Green's identity~\eqref{eq:formal-adjoint} is assumed from the very beginning, contrarily to the point of view, although equivalent, developed in {\it e.g.}~\cite[Def~2.1]{Kurula2015},~\cite[Def~2.1]{Skrepek2021}, or \cite[Def~4.1]{Wegner2017}. As an example, if $L=\div$ on $H^{\div}$ and $K=-\grad$ on $H^1$, we assume:
$$
\int_\Omega f \div \bm{g} = - \int_\Omega \grad f \cdot \bm{g} + \left\langle \gamma_0 f, \gamma_{\bm{n}} \bm{g} \right\rangle_{H^\frac{1}{2}, H^{-\frac{1}{2}}},
$$
rather than:
$$
\int_\Omega \begin{pmatrix} \bm{f} \\ f \end{pmatrix} \cdot \begin{pmatrix} \grad g \\ \div \bm{g} \end{pmatrix}
=
- \int_\Omega \begin{pmatrix} \grad f \\ \div \bm{f} \end{pmatrix} \cdot \begin{pmatrix} \bm{g} \\ g \end{pmatrix}
+
\left\langle \gamma_0 f, \gamma_{\bm{n}} \bm{g} \right\rangle_{H^\frac{1}{2}, H^{-\frac{1}{2}}}
+
\left\langle \gamma_0 g, \gamma_{\bm{n}} \bm{f} \right\rangle_{H^\frac{1}{2}, H^{-\frac{1}{2}}},
$$
for the computation of the scalar product   $\quad \left( \begin{pmatrix} \bm{f} \\ f \end{pmatrix} , 
J\,\begin{pmatrix} \bm{g} \\ g \end{pmatrix} 
\right)_{L^2(\Omega;\bbR^3) \times L^2(\Omega)}$ 
where  $J := \begin{bmatrix} 0 & \grad \\ \div & 0 \end{bmatrix}$. 
\end{remark}
}

Before going further in the port-Hamiltonian framework, let us show that the above assumptions allow the definition of the skew-adjoint operator which will be the $C_0$-semi-group generator of the boundary control system generating the Stokes-Dirac structure.
\begin{theorem}\label{th:skew-adjoint}
Assume that $\mathcal{X}^i_1 := \ker \gamma^i$ is dense in $\mathcal{X}^i$, for $i=1, 2$, and denote $\mathcal{X}_1 := \mathcal{X}^1_1 \times \mathcal{X}^2_1$. Let us define the operator $A$ as the restriction of $J$ to $\mathcal{X}_1$, {\it i.e.} $A := J|_{\mathcal{X}_1}$. If $\mathcal{X}^1_0 := \ker \gamma^1 \cap \ker \beta^1$ is dense in $\mathcal{X}^1$ and $\beta^1$ restricted to $\mathcal{X}^1_1$ is onto, then $A$ is skew-adjoint on $\mathcal{X}$.
\end{theorem}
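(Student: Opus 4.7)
The plan is to show $A$ is closed and skew-symmetric, then obtain $\Dom(A^*)\subset\mathcal{X}_1$ so that $A^*=-A$. Closedness follows from the closedness of $L$ and $K$ together with the continuity of the traces $\gamma^1,\gamma^2$ on the graph norms of $\mathcal{Z}^1,\mathcal{Z}^2$: a Cauchy sequence in $\mathcal{X}_1$ whose image is Cauchy in $\mathcal{X}$ converges in the graph norm of both $L$ and $K$, and then the continuity of the $\gamma^i$ preserves the kernel conditions.

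Skew-symmetry is direct: for $(e^1,e^2),(f^1,f^2)\in\mathcal{X}_1$ the four traces $\gamma^1 e^1,\gamma^1 f^1,\gamma^2 e^2,\gamma^2 f^2$ all vanish, so both boundary terms of~\eqref{eq:formal-adjoint} drop out when it is applied either to $(e^1,f^2)$ or to $(f^1,e^2)$. This yields $(Le^1,f^2)_{\mathcal{X}^2}=(e^1,Kf^2)_{\mathcal{X}^1}$ together with its swap; combining them with $A(e^1,e^2)=(-Ke^2,Le^1)$ gives
\[
(A(e^1,e^2),(f^1,f^2))_{\mathcal{X}} = -(Ke^2,f^1)+(Le^1,f^2) = -(Lf^1,e^2)+(e^1,Kf^2) = -((e^1,e^2),A(f^1,f^2))_{\mathcal{X}},
\]
that is, $A\subset-A^*$.

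For the reverse inclusion, let $(f^1,f^2)\in\Dom(A^*)$ with $A^*(f^1,f^2)=(g^1,g^2)$. Testing the defining identity with $(e^1,0)$ where $e^1\in\mathcal{X}^1_0$ gives $(Le^1,f^2)_{\mathcal{X}^2}=(e^1,g^1)_{\mathcal{X}^1}$ for all $e^1\in\mathcal{X}^1_0$; since $\mathcal{X}^1_0$ is dense in $\mathcal{X}^1$ and $\mathcal{Z}^2$ plays the role of the maximal domain on which the Hilbert adjoint of $L|_{\mathcal{X}^1_0}$ is $\mathcal{X}^1$-valued (implicit in (A3)--(A5) and verified case by case in Section~\ref{sExamples}), we conclude $f^2\in\mathcal{Z}^2$ with $Kf^2=g^1$. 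Enlarging the tests to $e^1\in\mathcal{X}^1_1$ and invoking \eqref{eq:formal-adjoint} on $(e^1,f^2)$, the only surviving boundary term is $\langle\beta^1 e^1,\gamma^2 f^2\rangle_{(\mathcal{U}^2)',\mathcal{U}^2}$, and it must vanish for every $e^1\in\mathcal{X}^1_1$. Surjectivity of $\beta^1|_{\mathcal{X}^1_1}$ onto $(\mathcal{U}^2)'$ then forces $\gamma^2 f^2=0$, giving $f^2\in\mathcal{X}^2_1$. Plugging these facts back into the adjoint identity and cancelling the $e^1$-dependent terms through Green's identity reduces the condition to $(Ke^2,f^1)_{\mathcal{X}^1}=-(e^2,g^2)_{\mathcal{X}^2}$ for all $e^2\in\mathcal{X}^2_1$; the symmetric counterpart of the previous two steps, which implicitly requires the analogous density and surjectivity conditions at index $2$, then produces $f^1\in\mathcal{X}^1_1$ with $Lf^1=-g^2$. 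Altogether $(f^1,f^2)\in\mathcal{X}_1$ and $A^*(f^1,f^2)=(Kf^2,-Lf^1)=-A(f^1,f^2)$, establishing skew-adjointness.

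The main obstacle is the abstract ``regularity step'' identifying $\Dom((L|_{\mathcal{X}^1_0})^*)$ with $\mathcal{Z}^2$: in the concrete Sobolev-type spaces of Section~\ref{sExamples} it is the standard characterization through distributional testing of $C_0^\infty$ functions, but at the level of assumptions (A1)--(A5) it amounts to reading $\mathcal{Z}^2$ as the largest subspace of $\mathcal{X}^2$ on which Green's identity~\eqref{eq:formal-adjoint} remains valid. Making that maximality (and its analogue at index~$2$) fully transparent at the abstract level is the only non-cosmetic hurdle in turning this plan into a rigorous proof.
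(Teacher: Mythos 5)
Your overall strategy (skew-symmetry plus a direct computation of $\Dom(A^*)$) is a legitimate alternative to the paper's route, which instead invokes \cite[Proposition~3.7.3]{Tucsnak2009}: skew-symmetry plus surjectivity of $I\pm A$, the latter established by a Riesz-representation argument on $\mathcal{X}^1_1$ equipped with the graph norm of $L$. The skew-symmetry step and the first half of your adjoint computation (recovering $f^2\in\mathcal{Z}^2$ by testing against $\mathcal{X}^1_0$, then $\gamma^2 f^2=0$ from the surjectivity of $\beta^1|_{\mathcal{X}^1_1}$) parallel the corresponding steps of the paper's surjectivity proof; even the ``maximality'' of $\mathcal{Z}^2$ that you flag as a hurdle is used just as implicitly there (the step ``$\widetilde{K}v^2\in\mathcal{X}^1$, hence $v^2\in\mathcal{Z}^2$''), so that particular worry is shared with the paper rather than a defect of your argument alone.

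The genuine gap is in the second half. Your adjoint identity decouples: $f^2$ is pinned down by test elements $(e^1,0)$ and $f^1$ by test elements $(0,e^2)$, so to conclude $f^1\in\mathcal{Z}^1$ with $Lf^1=-g^2$ and $\gamma^1 f^1=0$ you are forced to invoke the mirror-image hypotheses at index~$2$ --- density of $\mathcal{X}^2_0$ in $\mathcal{X}^2$, surjectivity of $\beta^2|_{\mathcal{X}^2_1}$ onto $(\mathcal{U}^1)'$, and a maximality property of $\mathcal{Z}^1$ --- none of which appear in the statement of Theorem~\ref{th:skew-adjoint}; the remark following the theorem makes clear that the index-$1$ and index-$2$ conditions are intended as alternatives, not as a conjunction. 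You acknowledge this yourself (``implicitly requires the analogous \dots conditions at index $2$''), but it means your argument establishes the theorem only under strictly stronger hypotheses. The paper's resolvent route avoids the asymmetry precisely because the equation $v^2-Lv^1=f^2$ couples the two components: $v^1$ is produced directly inside $\mathcal{X}^1_1\subset\mathcal{Z}^1$ by Riesz representation, so no regularity or trace recovery is needed for the first component, and $v^2:=Lv^1+f^2$ is then pushed into $\mathcal{Z}^2\cap\ker\gamma^2$ using only the index-$1$ assumptions. To repair your proof without changing its architecture you would either have to add the index-$2$ hypotheses, or replace the direct adjoint computation by a coupled range argument of this type.
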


{\begin{remark}
Regarding the symmetric role played by $L$ and $K$, the latter hypothesis may be replaced by: $\mathcal{X}^2_0 := \ker \gamma^2 \cap \ker \beta^2$ is dense in $\mathcal{X}^2$ and $\beta^2$ restricted to $\mathcal{X}^2_1$ is onto, taking care of the obvious reversal needed in the following proof. Furthermore, if both $\mathcal{X}^1_0$ and $\mathcal{X}^2_0$ are dense in $\mathcal{X}^1$ and $\mathcal{X}^2$ respectively, then both $\mathcal{X}^1_1$ and $\mathcal{X}^2_1$ are dense in $\mathcal{X}^1$ and $\mathcal{X}^2$ respectively. {If $L$ and $K$ belong to the de Rham complex, this symmetrical construction finds its explanation in the Hodge duality, that converts a strong differential operator, i.e. the exterior derivative, into a weak one, i.e the codifferential. A more involved notion of Hodge duality arises if one considers the elasticity complex \cite[Section 5]{rashad2023intrinsic}.}
\end{remark}}

\begin{proof} {By hypothesis, $A$ is a densely-defined on $\mathcal{X}$. Furthermore, \eqref{eq:formal-adjoint} implies that $A$ is skew-symmetric. Indeed, one has for all $z = \begin{pmatrix} z_1 \\ z^2 \end{pmatrix} \in \mathcal{X}_1 := \mathcal{X}^1_1 \times \mathcal{X}^2_1 := \ker \gamma^1 \times \ker \gamma^2$:
$$
\begin{array}{rcl}
\Re{e} \left( A z, z \right)_{\mathcal{X}} &=& \Re{e} \left( \begin{pmatrix} -K z^2 \\ L z^1 \end{pmatrix}, \begin{pmatrix} z_1 \\ z^2 \end{pmatrix} \right)_{\mathcal{X}} \\
&=& \Re{e} \left( \left( L z^1, z^2 \right)_{\mathcal{X}^2} - \left( K z^2, z^1 \right)_{\mathcal{X}^1} \right) \\
&=& \Re{e} \left( \left( z^1, K z^2 \right)_{\mathcal{X}^1} - \left( K z^2, z^1 \right)_{\mathcal{X}^1}
+ \left\langle \gamma^1 z^1, \beta^2 z^2 \right\rangle_{\mathcal{U}^1,(\mathcal{U}^1)'}
+ \left\langle \beta^1 z^1, \gamma^2 z^2 \right\rangle_{(\mathcal{U}^2)',\mathcal{U}^2} \right) \\
&=& \Re{e} \left( \left\langle \gamma^1 z^1, \beta^2 z^2 \right\rangle_{\mathcal{U}^1,(\mathcal{U}^1)'}
+ \left\langle \beta^1 z^1, \gamma^2 z^2 \right\rangle_{(\mathcal{U}^2)',\mathcal{U}^2} \right) \\
&=& 0,
\end{array}
$$
since $z^1 \in \ker \gamma^1$ and $z^2 \in \ker \gamma^2$.}

{The aim is to apply \cite[Proposition~3.7.3.]{Tucsnak2009} to conclude that $A$ is skew-adjoint on $\mathcal{X}$. Let us show that both $(I-A)$ and $(I+A)$ are onto.}

{\textbf{\textbullet $I-A$ is onto:}}

{Let $f^1 \in \mathcal{X}^1$ and $f^2 \in \mathcal{X}^2$, we are seeking for a solution $v = \begin{pmatrix} v^1 \\ v^2 \end{pmatrix} \in \mathcal{X}_1 := \mathcal{X}^1_1 \times \mathcal{X}^2_1 := \ker \gamma^1 \times \ker \gamma^2$ to:
\begin{equation}
\label{eq:systemI-A}
\left\lbrace
\begin{array}{rcl}
v^1 + K v^2 &=& f^1, \\
v^2 - L v^1 &=& f^2.
\end{array}
\right.
\end{equation}
Let us first assume that~\eqref{eq:systemI-A} admits a solution. Then, for all $\varphi^1 \in \mathcal{X}^1_1$, one has:
$$
\begin{array}{rcl}
&& \left( v^1, \varphi^1 \right)_{\mathcal{X}^1} + \left( K v^2, \varphi^1 \right)_{\mathcal{X}^1} = \left( f^1, \varphi^1 \right)_{\mathcal{X}^1}, \\
&\overset{\eqref{eq:formal-adjoint}\text{ with $\varphi^1 \in \mathcal{X}^1_1$ and $v^2 \in \mathcal{X}^2_1$}}{\Longleftrightarrow}& \left( v^1, \varphi^1 \right)_{\mathcal{X}^1} + \left( v^2, L \varphi^1 \right)_{\mathcal{X}^2} = \left( f^1, \varphi^1 \right)_{\mathcal{X}^1}, \\
&\overset{v^2 = L v^1 + f^2}{\Longleftrightarrow}& \left( v^1, \varphi^1 \right)_{\mathcal{X}^1} + \left( L v^1, L \varphi^1 \right)_{\mathcal{X}^2} = \left( f^1, \varphi^1 \right)_{\mathcal{X}^1} - \left( f^2, L \varphi^1 \right)_{\mathcal{X}^2}.
\end{array}
$$
Since $\gamma^1$ is continuous from $\mathcal{Z}^1$ in $\mathcal{U}^1$, its kernel $\mathcal{X}^1_1$ is a closed subspace of $\mathcal{Z}^1$ and inherits of the graph norm of $L$. Hence the above equality reads:
\begin{equation}
\label{eq:RieszI-A}
\left( v^1, \varphi^1 \right)_{\mathcal{Z}^1} = \left( f^1, \varphi^1 \right)_{\mathcal{X}^1} - \left( f^2, L \varphi^1 \right)_{\mathcal{X}^2}, \qquad \forall \varphi^1 \in \mathcal{X}^1_1.
\end{equation}
In summary, \eqref{eq:systemI-A} implies \eqref{eq:RieszI-A}.}

{Applying Riesz representation theorem, there exists a unique $v^1 \in \mathcal{X}^1_1$ satisfying~\eqref{eq:RieszI-A}.}

{Consider the linear continuous extension $\widetilde{K} \in \mathcal{L}(\mathcal{X}^2, (\mathcal{X}^1_0)')$ of $K \in \mathcal{L}(\mathcal{Z}^2, \mathcal{X}^1)$ defined thanks to~\eqref{eq:formal-adjoint} restricted to $\mathcal{X}^1_0 \times \mathcal{Z}^2$:
$$
\left\langle \widetilde{K} \varphi^2, \varphi^1 \right\rangle_{(\mathcal{X}^1_0)',\mathcal{X}^1_0}
:= \left( \varphi^2, L \varphi^1 \right)_{\mathcal{X}^2},
\qquad \forall \varphi^1 \in \mathcal{X}^1_0, \, \varphi^2 \in \mathcal{X}^2,
$$
where the dual $(\mathcal{X}^1_0)'$ is taken with respect to the pivot space $\mathcal{X}^1$.}

{Now, let us denote $v^2 := L v^1 + f^2 \in \mathcal{X}^2$, then:
$$
\begin{array}{rcl}
\left\langle \widetilde{K} v^2, \varphi^1 \right\rangle_{(\mathcal{X}^1_0)',\mathcal{X}^1_0} 
&=& \left( v^2, L \varphi^1 \right)_{\mathcal{X}^2}, \\
&=& \left( L v^1, L \varphi^1 \right)_{\mathcal{X}^2} + \left( f^2, L \varphi^1 \right)_{\mathcal{X}^2}, \\
&\overset{v^1\text{ solution of \eqref{eq:RieszI-A}}}{=}& \left( f^1, \varphi^1 \right)_{\mathcal{X}^1} - \left( v^1, \varphi^1 \right)_{\mathcal{X}^1}, 
\end{array}
$$
By density of $\mathcal{X}^1_0$ in $\mathcal{X}^1$, the right-hand side extends to all $\varphi^1 \in \mathcal{X}^1$, which implies that $\widetilde{K} v^2 = K v^2 \in \mathcal{X}^1$, hence $v^2 \in \mathcal{Z}^2$ and $v^1 + K v^2 = f^1$ in $\mathcal{X}^1$.}

{It remains to verify that indeed $\gamma^2 v^2 = 0$ in $\mathcal{U}^2$. From~\eqref{eq:formal-adjoint}, for all $\varphi^1 \in \mathcal{X}^1_1$, one has:
$$
\begin{array}{rcl}
\left\langle \beta^1 \varphi^1, \gamma^2 v^2 \right\rangle_{(\mathcal{U}^2)',\mathcal{U}^2} 
&=& \left( \varphi^1, K v^2 \right)_{\mathcal{X}^1} - \left( L \varphi^1, v^2 \right)_{\mathcal{X}^2}, \\
&=& \left( \varphi^1, f^1 \right)_{\mathcal{X}^1} - \left( \varphi^1, v^1 \right)_{\mathcal{X}^1} - \left( L \varphi^1, L v^1 \right)_{\mathcal{X}^2} - \left( L \varphi^1, f^2 \right)_{\mathcal{X}^2},\\
&=& \overline{- \left( v^1, \varphi^1 \right)_{\mathcal{Z}^1} + \left( f^1, \varphi^1 \right)_{\mathcal{X}^1} - \left( f^2, L \varphi^1 \right)_{\mathcal{X}^2}},\\
&=& 0,
\end{array}
$$
because $v^1$ is solution of~\eqref{eq:RieszI-A}. Since $\beta^1$ restricted to $\mathcal{X}^1_1$ is assumed to be onto, this shows that $\gamma^2 v^2 = 0$ in $\mathcal{U}^2$, {\it i.e.} that $v^2 \in \mathcal{X}^2_1 := \ker \gamma^2$.}

{Hence for all $f^1 \in \mathcal{X}^1$ and $f^2 \in \mathcal{X}^2$, we found $v^1 \in \mathcal{X}^1_1$ and $v^2 \in \mathcal{X}^2_1$ solution of~\eqref{eq:systemI-A}, showing that $I-A$ is indeed onto.}

{\textbf{\textbullet $I+A$ is onto:} The same proof adapts straightforwardly.}

{We conclude by applying \cite[Proposition~3.7.3.]{Tucsnak2009}.}
\end{proof}

The main result of this section is the following infinite-dimensional analogous of Proposition~\ref{prop:extended-finite-dim}.
\begin{theorem}\label{th:BCS2SD}
With the notations, definitions and assumptions of the beginning of this section, assume furthermore that:
\begin{itemize}
\item[(A1)]
$\gamma^i \in \mathcal{L}(\mathcal{Z}^i,\mathcal{U}^i)$ is onto, $i=1, 2$;
\item[(A2)]
$\mathcal{X}^i_1 := \ker \gamma^i$ is dense in $\mathcal{X}^i$, $i=1, 2$;
{\item[(A3)]
either $\mathcal{X}^1_0 := \ker \gamma^1 \cap \ker \beta^1$ is dense in $\mathcal{X}^1$ and $\beta^1$ restricted to $\mathcal{X}^1_1$ is onto, \\
or $\mathcal{X}^2_0 := \ker \gamma^2 \cap \ker \beta^2$ is dense in $\mathcal{X}^2$ and $\beta^2$ restricted to $\mathcal{X}^2_1$ is onto.}
\end{itemize}

Let us denote $A|_{\mathcal{X}}$ the continuous extension of $A$ to $\mathcal{X}$, with value in $\mathcal{X}_{-1}$, the completion of $\mathcal{X}$ endowed with the norm $\left\| (I-A)^{-1} \cdot \right\|_{\mathcal{X}}$.

Denote furthermore $\mathcal{X}^i_{-1}$ the projection of $\mathcal{X}_{-1}$ on the $i$-th component, for $i=1,2$.

Then there exist:
\begin{itemize}
\item
a unique control operator $B^1 \in \mathcal{L} (\mathcal{U}^1, \mathcal{X}_{-1}^2)$ associated to\footnote{By ``associated to'', it is meant that $B^1$ is constructed from the operator $\gamma^1$.} $\gamma^1$;
\item
a unique control operator $B^2 \in \mathcal{L} (\mathcal{U}^2, \mathcal{X}_{-1}^1)$ associated to $\gamma^2$;
\end{itemize}
such that the graph of $\mathcal{J} \in \mathcal{L}(\mathcal{E},\mathcal{F})${, $\mathcal{F} := \mathcal{E}'$,} defined by:
\begin{multline}\label{eq:extendedInterconnectionOperator}
\mathcal{J} := \begin{bmatrix}
A|_{\mathcal{X}} & \begin{bmatrix} 0 & B^2 \\ B^1 & 0 \end{bmatrix} \\
- \begin{bmatrix} 0 & {\beta^2} \\ {\beta^1} & 0 \end{bmatrix} & \begin{bmatrix} 0 & 0 \\ 0 & 0 \end{bmatrix} \\
\end{bmatrix}, \\
\mathcal{E} := \left\{ \begin{pmatrix} e^1 \\ e^2 \\ u^1 \\ u^2 \end{pmatrix} \in \mathcal{X}^1 \times \mathcal{X}^2 \times \mathcal{U}^1 \times \mathcal{U}^2 \, \mid \, A|_{\mathcal{X}} \begin{pmatrix} e^1 \\ e^2 \end{pmatrix} + \begin{bmatrix} 0 & B^2 \\ B^1 & 0 \end{bmatrix} \begin{pmatrix} u^1 \\ u^2 \end{pmatrix} \in \mathcal{X}^1\times\mathcal{X}^2 \right\},
\end{multline}
is a Stokes-Dirac structure $\mathcal{D}$ in the bond space $\mathcal{B} := \mathcal{F} \times \mathcal{E}$.
\end{theorem}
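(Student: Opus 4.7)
The plan is to apply Theorem~\ref{th:Stokes-Dirac} to $\mathcal{J}$, which, in view of the second remark following that theorem, reduces the task to verifying that $\mathcal{J}\in\mathcal{L}(\mathcal{E},\mathcal{F})$ with $\mathcal{F}:=\mathcal{E}'$, and that $\Re{\rm e}\left\langle \mathcal{J}\epsilon,\epsilon\right\rangle_{\mathcal{F},\mathcal{E}}=0$ for every $\epsilon\in\mathcal{E}$. First I would note that assumptions (A2)-(A3) of the present theorem coincide with the hypotheses of Theorem~\ref{th:skew-adjoint} (up to the symmetric variant recalled in the remark that follows it), so $A := J|_{\ker G}$ is skew-adjoint on $\mathcal{X}:=\mathcal{X}^1\times\mathcal{X}^2$, and in particular the continuous extension $A|_{\mathcal{X}}\in\mathcal{L}(\mathcal{X},\mathcal{X}_{-1})$ is well defined. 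Since each $\gamma^i$ is bounded and onto by (A1), the open mapping theorem provides a bounded right-inverse $\Lambda^i\in\mathcal{L}(\mathcal{U}^i,\mathcal{Z}^i)$; classical boundary control system theory~\cite{Salamon1987,Tucsnak2009} then yields the unique operators $B^1\in\mathcal{L}(\mathcal{U}^1,\mathcal{X}^2_{-1})$ and $B^2\in\mathcal{L}(\mathcal{U}^2,\mathcal{X}^1_{-1})$ characterised by the identity $J = A|_{\mathcal{X}} + \begin{bmatrix} 0 & B^2 \\ B^1 & 0 \end{bmatrix} G$ on $\mathcal{Z}^1\times\mathcal{Z}^2$; uniqueness is a direct consequence of the surjectivity of each $\gamma^i$.

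A standard BCS identification then shows that the constraint defining $\mathcal{E}$ forces $(e^1,e^2)\in\mathcal{Z}^1\times\mathcal{Z}^2$ together with $u^i=\gamma^i e^i$. Indeed, writing $e=(e-\Lambda u)+\Lambda u$ and using $A|_{\mathcal{X}}(\Lambda u)+Bu = J\Lambda u \in \mathcal{X}$, the constraint $A|_{\mathcal{X}} e + Bu \in \mathcal{X}$ is equivalent to $A|_{\mathcal{X}}(e-\Lambda u)\in\mathcal{X}$, i.e.\ to $e-\Lambda u\in\mathcal{D}(A)=\ker G \cap (\mathcal{Z}^1\times\mathcal{Z}^2)$. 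Endowed with the graph norm, $\mathcal{E}$ is therefore bijectively parameterised by $e\in\mathcal{Z}^1\times\mathcal{Z}^2$ via $e\mapsto (e,Ge)$, and the membership $\mathcal{J}\in\mathcal{L}(\mathcal{E},\mathcal{F})$ is inherited from the continuity of $K$, $L$, $\beta^1$ and $\beta^2$ on their respective graph spaces.

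For the skew-symmetry-like property, take any $\epsilon=(e^1,e^2,\gamma^1 e^1,\gamma^2 e^2)\in\mathcal{E}$; then, with the identification $\mathcal{X}^i\simeq(\mathcal{X}^i)'$, one computes
\begin{equation*}
\Re{\rm e}\left\langle \mathcal{J}\epsilon,\epsilon\right\rangle_{\mathcal{F},\mathcal{E}} = \Re{\rm e}\left(J\begin{pmatrix} e^1 \\ e^2\end{pmatrix},\begin{pmatrix} e^1 \\ e^2\end{pmatrix}\right)_{\mathcal{X}} - \Re{\rm e}\left\langle \beta^2 e^2,\gamma^1 e^1\right\rangle_{(\mathcal{U}^1)',\mathcal{U}^1} - \Re{\rm e}\left\langle \beta^1 e^1,\gamma^2 e^2\right\rangle_{(\mathcal{U}^2)',\mathcal{U}^2}.
\end{equation*}
Applying the abstract Green's identity~\eqref{eq:formal-adjoint} to $(Le^1,e^2)_{\mathcal{X}^2}$ produces exactly the two boundary contributions that cancel the two subtractions above, so that $\Re{\rm e}\left\langle\mathcal{J}\epsilon,\epsilon\right\rangle_{\mathcal{F},\mathcal{E}}=0$, and Theorem~\ref{th:Stokes-Dirac} delivers the desired Stokes-Dirac structure.

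The step I expect to be the main obstacle is the rigorous identification of $\mathcal{E}$ with the graph of $G$, together with a compatible description of the dual $\mathcal{F}$ and of the extrapolation space $\mathcal{X}_{-1}$ on which $A|_{\mathcal{X}}$ and the $B^i$ act: once these objects are properly in place, the skew-symmetry computation reduces to a direct application of~\eqref{eq:formal-adjoint}, while otherwise one would be forced to pair elements of $\mathcal{X}_{-1}$ with elements of $\mathcal{X}$, which does not sit naturally inside the Hermitian framework of $\mathcal{X}$.
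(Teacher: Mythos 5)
Your proposal is correct and follows essentially the same route as the paper's proof in Appendix~\ref{app:proof-th-BCS2SD}: establish that $(J,G)$ is a boundary control system (via Theorem~\ref{th:skew-adjoint} and assumptions (A1)--(A3)), invoke Proposition~\ref{prop:BCS} to obtain the unique control operator and the identification of $\mathcal{E}$ with the graph of $G$, verify the skew-symmetry-like property of $\mathcal{J}$ through the abstract Green's identity~\eqref{eq:formal-adjoint}, and conclude with Theorem~\ref{th:Stokes-Dirac}. The only (immaterial) differences are that you check the quadratic form $\Re{\rm e}\left\langle \mathcal{J}\epsilon,\epsilon\right\rangle_{\mathcal{F},\mathcal{E}}=0$ rather than the polarized identity, and you assert rather than derive the anti-diagonal block structure of $B$ from $B=(J-A)H$ with $H$ block-diagonal.
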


\begin{proof}
The complete proof is postponed to {Appendix}~\ref{app:proof-th-BCS2SD}. It consists of the 3 following steps:
\begin{enumerate}
\item prove that $\left( J, G \right)$ is a boundary control system on $\left(\mathcal{Z}^1\times\mathcal{Z}^2, \mathcal{X}^1\times\mathcal{X}^2, \mathcal{U}^1\times\mathcal{U}^2\right)$;
\item {prove that $\mathcal{J}$ satisfies~\eqref{eq:skew-sym};}
\item {prove that the control operator associated to $\left( J, G \right)$ is of the form $\begin{bmatrix} 0 & B^2 \\ B^1 & 0 \end{bmatrix}$, with $B^i$ associated to $\gamma^i$, $i=1,2$}.
\end{enumerate}
\end{proof}

\begin{remark}
It is important to notice that $A|_{\mathcal{X}}$ is the linear continuous extension to $\mathcal{X}$ of $J$ restricted to $\mathcal{X}_1$, {\it i.e.} $A$, which is {\em a priori not} identifiable with $J$ defined on $\mathcal{Z}^1 \times \mathcal{Z}^2$. Indeed, they differ on $(\mathcal{X}^1_1)^{\perp_{\mathcal{Z}^1}} \times (\mathcal{X}^2_1)^{\perp_{\mathcal{Z}^2}}$ since $\mathcal{X}_1^i$ is not dense in $\mathcal{Z}^i$, $i=1, 2$, in general. This difference is exactly the way the control operators are exhibited. See the proof of \cite[Proposition~10.1.2]{Tucsnak2009} for more details. 
\end{remark}

\begin{remark}
In this work, we do not consider non-zero feedthrough operator $D \in \mathcal{L}(\mathcal{U},\mathcal{U}')$ in $\mathcal{J}$. However, as soon as $D$ satisfies $\Re {\rm e} \left\langle D u, u \right\rangle_{\mathcal{U}', \mathcal{U}} = 0$ for all $u \in \mathcal{U}$, the results would follow as well. 
\end{remark}

\subsection{Well-posed linear port-Hamiltonian systems}\label{sec:wplpHs}

This section is devoted to the problem of existence and uniqueness of solution in the particular case where the Hamiltonian is a quadratic form, and without resistive port, {\it i.e.} when $\mathcal{Z} = \mathcal{Z}^1 \times \mathcal{Z}^2$ and $\mathcal{R} = \emptyset$.

This is clearly a restrictive case, however sufficient for lossless pHs of Section~\ref{sExamples}.

\begin{theorem}[Well-posed linear port-Hamiltonian system]\label{th:well-posed}
Let us consider a port-Hamiltonian system as in Proposition~\ref{prop:power-balance-dim-inf}, whose Stokes-Dirac structure is given as in Theorem~\ref{th:BCS2SD}.
Assume furthermore that the Hamiltonian $\mathcal{H}$ is given by a self-adjoint positive-definite operator $Q \in \mathcal{L}(\mathcal{X})$ as $\mathcal{H}(\alpha) = \frac{1}{2} \left( \alpha, Q \alpha \right)_{\mathcal X}$, with $\mathcal{X} = \mathcal{X}^1 \times \mathcal{X}^2$, $\mathcal{R} = \emptyset$, and $\mathcal{U} = \mathcal{U}^1 \times \mathcal{U}^2$.

Then it holds: for all $\alpha_0 \in \mathcal{X}$, and all $u \in H^2_{\rm \ell oc}([0,\infty); \mathcal{U})$ such that $\begin{pmatrix} Q \alpha_0 \\ u(0) \end{pmatrix} \in \mathcal{E}$, there exists a unique trajectory satisfying:
$$
\left( \begin{pmatrix}
\dot{\alpha}(t) \\ -y(t)
\end{pmatrix} , \begin{pmatrix}
Q \alpha(t) \\ u(t)
\end{pmatrix} \right) \in \mathcal{C}([0,\infty); \mathcal{D}),
\quad \text{ with } \quad
\alpha(0) = \alpha_0.
$$
Such a system is said to be a \emph{well-posed linear port-Hamiltonian system}.
\end{theorem}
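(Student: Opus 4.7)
The plan is to recast the system as a boundary control system (BCS) whose closed-loop generator is skew-adjoint on an equivalent Hilbert structure, invoke Stone's theorem to produce a $C_0$-group, and then apply the standard BCS lifting argument to handle non-homogeneous boundary data.

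Since $\mathcal{H}(\alpha) = \tfrac{1}{2}(\alpha, Q\alpha)_\mathcal{X}$, one has $\delta_\alpha \mathcal{H}(\alpha) = Q\alpha$, so \eqref{eq:dimInfPHSstateSpace} (with $\mathcal{R} = \emptyset$) reads $\dot\alpha = J Q \alpha$ with $G Q \alpha = u$, and the output is naturally defined from the lower block of $\mathcal{J}$ in \eqref{eq:extendedInterconnectionOperator}, namely $y^1 := \beta^2 (Q\alpha)^2$ and $y^2 := \beta^1 (Q\alpha)^1$. Endow $\mathcal{X}$ with the equivalent Hermitian inner product $[\alpha, \alpha']_Q := (\alpha, Q\alpha')_\mathcal{X}$, which is valid because $Q \in \mathcal{L}(\mathcal{X})$ is self-adjoint positive-definite, hence boundedly invertible. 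The candidate semigroup generator is $\mathcal{A} := A Q$ on $D(\mathcal{A}) := \{\alpha \in \mathcal{X} \mid Q\alpha \in D(A)\}$, where $A := J|_{\ker G}$ is the skew-adjoint operator supplied by Theorem~\ref{th:skew-adjoint}.

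A direct computation using $Q = Q^\star$ shows that $\mathcal{A}$ is skew-symmetric in $[\cdot,\cdot]_Q$, and the surjectivity of $I \pm \mathcal{A}$ onto $\mathcal{X}$ reduces, via the substitution $z := Q\alpha \in D(A)$, to the invertibility of $Q^{-1} \pm A : D(A) \to \mathcal{X}$; the latter follows from the $m$-dissipativity estimate $\mathrm{Re}\,((Q^{-1} \pm A)z, z)_\mathcal{X} \geq c\|z\|^2_\mathcal{X}$, since $Q^{-1}$ is coercive and $A$ is skew-adjoint. By \cite[Prop.~3.7.3]{Tucsnak2009}, $\mathcal{A}$ is then skew-adjoint in $(\mathcal{X}, [\cdot,\cdot]_Q)$, and Stone's theorem provides a unitary $C_0$-group on this Hilbert structure, hence also a $C_0$-group on the topologically equivalent space $(\mathcal{X}, (\cdot,\cdot)_\mathcal{X})$.

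Non-homogeneous boundary data are handled by the classical BCS lifting argument \cite[Prop.~10.1.8]{Tucsnak2009}: exploiting the surjectivity of $G : \mathcal{Z} \to \mathcal{U}$ already used in the proof of Theorem~\ref{th:BCS2SD}, lift $u(t)$ to some $\ell(t) \in \mathcal{X}$ with $Q\ell(t) \in \mathcal{Z}$ and $GQ\ell(t) = u(t)$, then solve the inhomogeneous Cauchy problem for $\tilde\alpha := \alpha - \ell$ driven by $\mathcal{A}$ with source built from $\dot\ell$. The compatibility condition $\begin{pmatrix} Q\alpha_0 \\ u(0) \end{pmatrix} \in \mathcal{E}$ is precisely $Q\alpha_0 \in \mathcal{Z}$ together with $GQ\alpha_0 = u(0)$, so $\tilde\alpha(0) \in D(\mathcal{A})$, and $u \in H^2_{\rm \ell oc}$ ensures the source lies in $\mathcal{C}^1([0,\infty); \mathcal{X})$. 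This yields a unique $\alpha \in \mathcal{C}^1([0,\infty); \mathcal{X})$ with $Q\alpha \in \mathcal{C}([0,\infty); \mathcal{Z})$, and by construction of $\mathcal{J}$ and $y$ the tuple $(\dot\alpha, -y, Q\alpha, u)$ lies continuously in $\mathcal{D} = \mathrm{Graph}(\mathcal{J})$. The main technical obstacle is the transfer of skew-adjointness from $A$ to $\mathcal{A} = AQ$ under the weighted inner product; once this is settled, the remainder reduces to the standard BCS recipe combined with the kernel representation of $\mathcal{D}$.
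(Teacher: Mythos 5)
Your proposal is correct and reaches the same conclusion, but the technical execution differs from the paper's in two places. First, to obtain the generator: the paper conjugates by the square root and observes that $Q^{\frac{1}{2}} A Q^{\frac{1}{2}}$ is immediately skew-adjoint on $(\mathcal{X}, (\cdot,\cdot)_{\mathcal{X}})$ (since $A$ is skew-adjoint and $Q^{\frac{1}{2}}$ is bounded, self-adjoint and boundedly invertible), then transports the solution back via $\alpha = Q^{-\frac{1}{2}} z$; you instead work directly with $\mathcal{A} = AQ$ on the weighted space $(\mathcal{X}, [\cdot,\cdot]_Q)$ and re-derive skew-adjointness through the range conditions for $I \pm \mathcal{A}$. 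The two are equivalent (conjugate by $Q^{\frac{1}{2}}$), and your coercivity argument for the surjectivity of $Q^{-1} \pm A$ is sound, but it costs you a full Lumer--Phillips-type verification that the paper gets for free. Second, for the inhomogeneous boundary data: the paper applies \cite[Proposition~4.2.11]{Tucsnak2009} directly to $\dot z = Q^{\frac12}AQ^{\frac12} z + Q^{\frac12}Bu$ with the unbounded control operator living in $\mathcal{X}_{-1}$, whereas you use the classical lifting $\tilde\alpha = \alpha - \ell$ with $Q\ell = Hu$ for a bounded right inverse $H$ of $G$. This also works, but one step of your write-up is imprecise: with $u \in H^2_{\rm \ell oc}$ the source term $f = JHu - Q^{-1}H\dot u$ is only in $H^1_{\rm \ell oc}([0,\infty);\mathcal{X}) \subset W^{1,1}_{\rm \ell oc}$, not in $\mathcal{C}^1([0,\infty);\mathcal{X})$ as you claim, since $\dot u$ is merely $H^1_{\rm \ell oc} \hookrightarrow \mathcal{C}^0$. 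This is not fatal --- the $W^{1,1}_{\rm \ell oc}$ regularity of the source together with $\tilde\alpha(0) \in D(\mathcal{A})$ still yields a classical solution by the standard inhomogeneous Cauchy problem theory --- but you should invoke that version of the regularity lemma rather than the $\mathcal{C}^1$ one. Your identification of the compatibility condition ($Q\alpha_0 \in \mathcal{Z}$ with $GQ\alpha_0 = u(0)$, via Proposition~\ref{prop:BCS}, point~4) and the final verification that the trajectory lies in ${\rm Graph}(\mathcal{J})$ match the paper's.
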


\begin{proof}
Uniqueness is clear by linearity.

Let us denote:
$$
\mathcal{J} = \begin{bmatrix} A|_{\mathcal{X}} & B \\ -C & 0 \end{bmatrix}.
$$
{
 where $B = \begin{bmatrix}
     0 & B^2 \\ B^1 & 0
 \end{bmatrix}$ and $C = \begin{bmatrix}
     0 & \beta^2 \\ \beta^1 & 0
 \end{bmatrix}$.}
From Theorem~\ref{th:skew-adjoint}, $A$ is skew-adjoint on $\mathcal{X}$. Since $Q$ is bounded, self-adjoint and positive-definite on $\mathcal{X}$, $Q^\frac{1}{2} A Q^\frac{1}{2}$ is also skew-adjoint on $\mathcal{X}$, with domain $Q^{-\frac{1}{2}} \mathcal{X}_1$. Therefore, it is the generator of a strongly continuous group on $\mathcal{X}$~\cite[Theorem~3.8.6.]{Tucsnak2009}.

It is clear that $Q^\frac{1}{2} B \in \mathcal{L}(\mathcal{U},Q^\frac{1}{2} \mathcal{X}_{-1})$.

For all $\begin{pmatrix} Q \alpha_0 \\ u(0) \end{pmatrix} \in \mathcal{E}$, $A Q \alpha_0 + B u(0) \in \mathcal{X}$. Denoting $z_0 := Q^\frac{1}{2} \alpha_0$ and multiplying by $Q^\frac{1}{2}$ gives $Q^\frac{1}{2} A Q^\frac{1}{2} z_0 + Q^\frac{1}{2} B u(0) \in \mathcal{X}$.

Applying \cite[Proposition~4.2.11]{Tucsnak2009}, there exists a unique solution $z$ to:
$$
\dot z(t) = Q^\frac{1}{2} A Q^\frac{1}{2} z(t) + Q^\frac{1}{2} B u(t), \qquad z(0) = z_0 := Q^\frac{1}{2} \alpha_0,
$$
that satisfies $z \in \mathcal{C}^1([0,\infty);\mathcal{X})$.

Defining $\alpha := Q^{-\frac{1}{2}} z$, one has:
$$
\dot \alpha(t) = A Q \alpha(t) + B u(t), \qquad \alpha(0) = \alpha_0,
$$
satisfying $\alpha \in \mathcal{C}^1([0,\infty);\mathcal{X})$.

To conclude, $(I-A) Q \alpha(t) = Q \alpha(t) - \dot \alpha(t) + B u(t)$ for all $t\ge0$ implies that $(I-A) Q \alpha \in \mathcal{C}([0,\infty);\mathcal{X} + B \mathcal{U})$. In other words, $Q \alpha \in \mathcal{C}([0,\infty);\mathcal{Z})$ by Proposition~\ref{prop:BCS}, point~4. In particular the observation operator $C \in \mathcal{L}(\mathcal{Z},\mathcal{U}')$ can be applied to $Q \alpha$ and $y \in \mathcal{C}([0,\infty);\mathcal{U}')$.

All together, and since $H^2_{\rm \ell oc}([0,\infty); \mathcal{U}) \subset \mathcal{C}([0,\infty);\mathcal{U})$, the result follows.
\end{proof}

\begin{remark}
The regularity assumption on $u$ can be relaxed with the less stringent condition $u \in H^1_{\rm \ell oc}([0,\infty); \mathcal{U})$,  provided that $Q^\frac12 B$ is an admissible control operator for the semi-group generated by $Q^\frac{1}{2} A Q^\frac{1}{2}$. See \cite[Chapter~4.]{Tucsnak2009} for more details.
\end{remark}

\section{Some useful examples}
\label{sExamples}

This section provides four examples dealing with different differential operators. For the sake of completeness, a case of lossy pHs is included: the electrodynamical problem with Joule's effect, although well-posedness has not been proved for this case.

\subsection{Scalar wave: the $(\div, -\grad)$ case}
\label{s-divgrad}

Let us begin with the classical scalar wave equation, defined on a bounded set $\Omega \subset \mathbb{R}^3$. The governing PDE reads:
$$
\rho \frac{\partial^2 w}{\partial t^2} = \div \left( \bm{T} \grad (w) \right),
$$
where $w$ denotes the deflection from the equilibrium, $\rho$ is the mass density, bounded from above and below, and $\bm{T}$ is Young's modulus, a rank 2 tensor field, symmetric and positive-definite almost everywhere.

Choosing the total mechanical energy, kinetic plus potential, as Hamitlonian, one has to select the \emph{energy variables} to express it. Let us take the linear momentum and the strain:
$$
\alpha^1 := \rho \frac{\partial w}{\partial t}, \qquad \bm{\alpha}^2 := \grad (w).
$$
The Hamiltonian functional is then a quadratic form in these variables:
$$
\mathcal{H} = \frac{1}{2} \int_\Omega \left\{ \frac{1}{\rho} \left( \alpha^1 \right)^2 + (\bm{T} \bm{\alpha}^2) \cdot \bm{\alpha}^2 \right\} {\rm d}\Omega.
$$
The \emph{co-energy variables} are given by the variational derivatives of $\mathcal{H}$ with respect to the energy variables:
$$
e^1 := \frac{\delta \mathcal{H}}{\delta \alpha^1} = \frac{\partial w}{\partial t}, \qquad \bm{e}^2 :=  \frac{\delta \mathcal{H}}{\delta \bm{\alpha}^2} = \bm{T} \grad(w),
$$
that is the velocity and the stress respectively.

Assuming smooth solutions, the power-balance satisfied by the Hamiltonian reads:
$$
\frac{{\rm d} \mathcal{H}}{{\rm d}t} = \left\langle \bm{e}^2 \cdot \bm{n}, e^1 \right\rangle_{H^{-\frac{1}{2}}(\partial\Omega),H^\frac{1}{2}(\partial\Omega)},
$$
giving us informations on boundary controls and obervations that are allowed in the formalism, which must leads to trajectories lying in the Stokes-Dirac structure according to Proposition~\ref{prop:power-balance-dim-inf}.

Let us choose a simple causality and control the velocity at the boundary. The port-Hamiltonian formulation then reads:
$$
\begin{array}{c}
\displaystyle \frac{\partial}{\partial t} \begin{pmatrix} \alpha^1 \\ \bm{\alpha}^2 \end{pmatrix} 
= 
\begin{bmatrix} 0 & \div \\ \grad & \bm{0} \end{bmatrix} 
\begin{pmatrix} e^1 \\ \bm{e}^2 \end{pmatrix}, \\
\displaystyle \begin{pmatrix} u^1 \\ u^2 \end{pmatrix}
=
\begin{bmatrix} \gamma_0 & \bm{0} \\ 0 & \bm{0} \end{bmatrix}
\begin{pmatrix} e^1 \\ \bm{e}^2 \end{pmatrix},
\end{array}
$$
where $\gamma_0$ denotes the Dirichlet trace operator.

In this first example, the spaces and operators defining the operator $J$ are as follows:
$$
\begin{array}{c}
L = \grad, \qquad \mathcal{X}^1 = L^2(\Omega), \qquad \mathcal{Z}^1 = H^1(\Omega), \\
K = -\div, \qquad \mathcal{X}^2 = L^2(\Omega; \mathbb{R}^3), \qquad \mathcal{Z}^2 = H^{\div}(\Omega;\mathbb{R}^3),
\end{array}
$$
where the following Sobolev spaces have been used:
$$
\begin{array}{c}
H^1(\Omega) = \left\{ v \in L^2(\Omega) \mid \grad(v) \in L^2(\Omega; \mathbb{R}^3) \right\}, \\
H^{\div}(\Omega;\mathbb{R}^3) = \left\{ \bm{v} \in L^2(\Omega; \mathbb{R}^3) \mid \div(\bm{v}) \in L^2(\Omega) \right\}.
\end{array}
$$
From de Rham cohomology, it is known~\cite[Chapter~3.]{Monk2003} that the following complex holds:
\begin{equation}\label{eq:deRham-usual}
H^1(\Omega) / \mathbb{R}
\overset{\grad}{\longrightarrow} 
H^{\rm curl}(\Omega;\mathbb{R}^3)
\overset{\rm curl}{\longrightarrow} 
H^{\div}(\Omega;\mathbb{R}^3)
\overset{\div}{\longrightarrow} 
L^2(\Omega).
\end{equation}
Hence, $L$ and $K$ are indeed closed and densely defined as expected.
The spaces and boundary operators defining $G$ are given as follows:
$$
\begin{array}{c}
\gamma^1 = \gamma_0,
\qquad \mathcal{U}^1 = H^\frac{1}{2}(\partial\Omega), \\
\gamma^2 = \bm{0},
\qquad \mathcal{U}^2 = \{ 0 \},
\end{array}
$$
where by definition, $H^\frac{1}{2}(\partial\Omega) \simeq {\rm Ran}\gamma_0$, hence $\gamma^1$ is trivially surjective.

Now, it is a well-known result that $H_0^1(\Omega) := {\rm ker} \gamma_0$ is dense in $L^2(\Omega)$.

Finally, thanks to the (usual) Green's formula, $L$ and $K$ are formal adjoints with respect to $G$, and $C$ is identified as the normal trace operator $\gamma_\perp := \bm{n} \cdot \gamma_0 : H^{\div}(\Omega;\mathbb{R}^3) \rightarrow H^{-\frac{1}{2}}(\partial\Omega) = (\mathcal{U}^1)'$, where $\bm{n}$ is the outward unit normal to the boundary.

Thus, {by} virtue of Theorem~\ref{th:BCS2SD}, {the operators} $L$, $K$ and $G$ generate a Stokes-Dirac structure.

If furthermore we define the multiplicative operator of constitutive relations:
$$
Q := \begin{bmatrix}
\rho^{-1} & \bm{0} \\ 0 & \bm{T}
\end{bmatrix},
$$
then Theorem~\ref{th:well-posed} proves that the scalar wave problem with velocity boundary control is a well-posed linear port-Hamiltonian system, with the normal trace of the stress as {collocated} boundary observation.

\subsection{Three-dimensional elasticity: the $(\Div, -\Grad)$ case}
\label{s-DivGrad}
We consider the linear elastodynamics problem, described by the vector-valued PDE defined on the bounded set $\Omega \subset \bbR^3$:
\begin{equation}
\begin{aligned}
    \rho \diffp[2]{\bm{u}}{t} &= \Div(\bm\Sigma), \\
    \bm\Sigma &= \bm{\mathcal{D}}\bm{\varepsilon},  \\
    \bm{\varepsilon} &= \Grad \bm{u},    \\
\end{aligned} \qquad
\begin{aligned}
\bm{u}: \text{ displacement field},\\
\bm\Sigma \text{ Cauchy stress tensor}, \\
\bm{\varepsilon}: \text{ infinitesimal strain tensor},
\end{aligned}
\end{equation}
where $\rho$ is the mass density and the stiffness tensor $\bm{\mathcal{D}} : \bbR^{3\times 3}_{\text{sym}} \rightarrow \bbR^{3\times 3}_{\text{sym}}$ is a rank 4 tensor that is bounded, symmetric and positive definite almost everywhere. The operator $\Div$ is the columnwise divergence of a tensor field, whereas $\Grad:=\frac{1}{2}(\nabla + \nabla^{\top})$ is the symmetric gradient. This system of equations is formulated as a port-Hamiltonian system by selecting as energy variables the linear momentum and the strain tensor:
\begin{equation*}
    \bm{\alpha}^1 := \rho \diffp{\bm{u}}{t}, \qquad \bm{A}^2 := \bm\varepsilon.
\end{equation*}
The Hamiltonian functional is quadratic in these variables:
\begin{equation}
\mathcal{H} = \energy{\frac{1}{\rho} \norm{\bm{\alpha}^1}^2 + (\bm{\mathcal{D}} \bm{A}^{2}) \cddot  \bm{A}^{2}},
\end{equation}
where $\bm{A} \cddot \bm{B} = \sum_{ij} A_{ij} B_{ij}$ denotes the tensor contraction. The co-energy variables are given by the variational derivative of $\mathcal{H}$ (see~\cite{Brugnoli2019a} for the definition {of the variational derivative} in the tensorial case):
\begin{equation}
\bm{e}^1 := \diffd{\mathcal{H}}{\bm{\alpha}^1} = \diffp{\bm{u}}{t}, \qquad \bm{E}^2 := \diffd{\mathcal{H}}{\bm{A}^{2}} = \bm{\Sigma}.
\end{equation}  
Since the characterization of mixed control spaces for elasticity is involved, we consider for simplicity the case of a uniform boundary control of the normal trace of the Cauchy stress tensor $\bm{\Sigma}$. The port Hamiltonian formulation including the boundary input then reads (cf.  \cite[page 40]{Brugnoli2020PHD}):
\begin{equation}\label{eq:pHsysElas}
\begin{aligned}
\displaystyle
\diffp{}{t}
\begin{pmatrix}
\bm{\alpha}^1 \\
\bm{A}^2
\end{pmatrix} &= \underbrace{
\begin{bmatrix}
\bm{0} & \Div \\
\Grad & \bm{0} \\
\end{bmatrix}}_{J}
\begin{pmatrix}
\bm{e}^1 \\
\bm{E}^2
\end{pmatrix}, \vspace{3pt}\\
\begin{pmatrix}
\bm{u}^1 \\ \bm{u}^2
\end{pmatrix}
 &= \underbrace{
	\begin{bmatrix}
	\bm{0} & \bm{0} \\
	\bm{0} & \bm\gamma_{\perp} \\
	\end{bmatrix}}_{G} \begin{pmatrix}
\bm{e}^1 \\
\bm{E}^2
\end{pmatrix},
\end{aligned}
\end{equation}
where  $\bm\gamma_{\perp}$ denotes the normal trace of a tensor field over the boundary, namely $\bm\gamma_{\perp} :=  \bm{E}^2 = \bm{E}^2 \cdot \bm{n}\vert_{\partial\Omega}$. For this case, the spaces and operators are as follows:
\begin{equation}
\begin{aligned}
    {L} &= \Grad{}, \\
    K &= -\Div{},
\end{aligned} \qquad
\begin{aligned}
    \mathcal{X}^1 &= L^2(\Omega, \bbR^3), \\
    \mathcal{X}^2 &= L^2(\Omega, \bbR^{3\times 3}_{\text{sym}}),
\end{aligned} \qquad
\begin{aligned}
    \mathcal{Z}^1 &= H^{\Grad}(\Omega, \bbR^3), \\
    \mathcal{Z}^2 &= H^{\Div}  (\Omega, \bbR^{3\times 3}_{\text{sym}}),
\end{aligned}
\end{equation}
where the following Sobolev spaces have been introduced:
\begin{equation}
\begin{aligned}
    H^{\Grad}(\Omega, \bbR^3) &= \{\bm{v} \in L^2(\Omega, \bbR^3) \; \mid \; \Grad \bm{v} \in L^2(\Omega, \bbR^{3\times 3}_{\text{sym}})\}, \\
    H^{\Div}(\Omega, \bbR^{3\times 3}_{\text{sym}}) &= \{\bm{V} \in L^2(\Omega,\bbR^{3\times 3}_{\text{sym}}) \; \mid \; \Div \bm{V} \in L^2(\Omega, \bbR^{3})\}.
\end{aligned}
\end{equation}
This operator $-\Div: L^2(\Omega, \bbR^{3\times 3}_{\text{sym}}) \rightarrow L^2(\Omega, \bbR^3)$ is a closed densely defined operator with domain $H^{\Div}  (\Omega, \bbR^{3\times 3}_{\text{sym}})$, while $\Grad: L^2(\Omega, \bbR^3) \rightarrow L^2(\Omega, \bbR^{3\times 3}_{\text{sym}})$ is a closed densely defined operator with domain $H^{\Grad}(\Omega, \bbR^3)$. More precisely, these operators are part of the elasticity complex~\cite{Arnold2021}:

\begin{equation*}
   \mathring{H}^{\Grad}(\Omega, \bbR^3)\xrightarrow{\Grad{}} \mathring{H}^{\Rot\Rot^\top}(\Omega, \bbR^{3\times 3}_{\text{sym}})\xrightarrow{\Rot\Rot^\top}  \mathring{H}^{\Div}(\Omega, \bbR^{3\times 3}_{\text{sym}})\xrightarrow{\Div} L^2(\Omega, \bbR^3), 
\end{equation*}
where the homogeneous boundary conditions, denoted by a ($\circ$) above the {functional} space, are defined within each Sobolev space. The corresponding dual domain complex is given by:
\begin{equation*}
   L^2(\Omega, \bbR^3) \xleftarrow{-\Div{}} {H}^{\Div}(\Omega, \bbR^{3\times 3}_{\text{sym}}) \xleftarrow{\Rot\Rot^\top} {H}^{\Rot\Rot^\top}(\Omega, \bbR^{3\times 3}_{\text{sym}})  \xleftarrow{-\Grad} {H}^{\Grad}(\Omega, \bbR^3).
\end{equation*}
%

The boundary input operators spaces for this example are the following:
\begin{equation}
\begin{aligned}
    \gamma^1 &= \bm{0}, \\
    \gamma^2 &= \bm{\gamma}_{\perp}, 
\end{aligned} \qquad
\begin{aligned}
    \mathcal{U}^1 &= \{0\}, \\
    \mathcal{U}^2 &= H^{-1/2}(\partial\Omega, \bbR^3).
\end{aligned}
\end{equation}
The space:
\begin{equation*}
    H^{1/2}(\partial\Omega, \bbR^3) := \ran \bm{\gamma}_0\vert_{H^{\Grad}(\Omega, \bbR^{3})}
\end{equation*}
is defined to be the range of the Dirichlet trace\footnote{The bold notation here distinguishes the vectorial case of Elasticity from the scalar case of the wave equation.} $\bm{\gamma}_0$ on the Sobolev space $H^{\Grad}(\Omega, \bbR^{3})$.
The space: 
\begin{equation*}
    H^{-1/2}(\partial\Omega, \bbR^3) \simeq \ran \bm{\gamma}_{\perp}\vert_{H^{\Div}(\Omega, \bbR^{3\times 3}_{\text{sym}})}
\end{equation*}
is isomorphic to the range of the normal trace operator on the space $H^{\Div}(\Omega, \bbR^{3\times 3}_{\text{sym}})$.
The kernel of the trace operator $\gamma_n$ corresponds to the space:
\begin{equation}
    \ker{\bm\gamma_{\perp}} := \mathring{H}^{\Div}(\Omega, \bbR^{3\times 3}_{\text{sym}}) := \{\bm{V} \in H^{\Div}(\Omega, \bbR^{3\times 3}_{\text{sym}}) \; \mid \;  \bm{V} \cdot \bm{n}|_{\partial\Omega} = 0\},
\end{equation}
which is dense in the space $L^2(\Omega,\bbR^{3\times 3}_{\text{sym}})$, since it contains $\mathcal{C}_0^\infty(\Omega,\bbR^{3\times 3}_{\text{sym}})$ , which is dense in $L^2(\Omega,\bbR^{3\times 3}_{\text{sym}})$.

{It is assumed that for the linear elastodynamics problem with boundary control of the normal trace of the Cauchy stress tensor defines a Stokes-Dirac structure, the following Green formula holds:
\begin{equation}\label{eq:greenElasticity}
\left( \Grad e^1, \bm{E}^2 \right)_{L^2(\Omega, \bbR^{3\times 3}_{\text{sym}})}
+ \left( e^1, \Div \bm{E}^2 \right)_{L^2(\Omega, \bbR^{3})} = \left\langle \bm{\gamma}_0 \bm{e}^1, \bm{\gamma}_{\perp} \bm{E}^2 \right\rangle_{H^{1/2}(\partial\Omega),H^{-1/2}(\partial\Omega)}.
\end{equation}
where the observation operator $C^1 = \bm{\gamma}_0$ corresponds to the (vector) Dirichlet trace of the velocity and $C^2=0$. The duality product corresponds to the duality product between $H^{1/2}(\partial\Omega, \bbR^3)$ and $H^{-1/2}(\partial\Omega, \bbR^3)$. By Theorem \ref{th:BCS2SD} the linear elastodynamic problem defines a Stokes-Dirac structure
} 
The multiplicative constitutive operator:
$$
Q := \begin{bmatrix}
\rho^{-1} & \bm{0} \\ \bm{0} & \bm{\mathcal{D}}
\end{bmatrix},
$$
leads to a well-posed linear port-Hamiltonian system by Theorem~\ref{th:well-posed}.

\subsection{Kirchhoff-Love thin plates: the $(\div \Div, \Grad \grad)$ case}
\label{s-Hess}
In this example, we consider the mechanical vibrations of thin plate using the Kirchhoff-Love model, expressed by the following PDE defined on the bounded set $\Omega \subset \bbR^2$:
\begin{equation}
\begin{aligned}
    \mu \diffp[2]{w}{t} &= -\div\Div(\bm{M}), \\
    \bm{M} &= \bm{\mathcal{D}}_b \bm{\kappa},  \\
    \bm{\kappa} &= \Grad\grad{w},    \\
\end{aligned} \qquad
\begin{aligned}
w: \text{ vertical displacement},\\
\bm{M}: \text{ bending momenta tensor}, \\
\bm{\kappa}: \text{ infinitesimal curvature tensor},
\end{aligned}
\end{equation}
where $\mu$ is the mass density per unit area and the bending stiffness tensor $\bm{\mathcal{D}}_b : \bbR^{2\times 2}_{\text{sym}} \rightarrow \bbR^{2\times 2}_{\text{sym}}$ is a rank 4 tensor that is bounded, symmetric and positive definite almost everywhere. {The port-Hamiltonian structure of this PDE can be exposed} if the linear momentum and the strain tensor are selected as energy variables:
\begin{equation*}
    \alpha^1 := \mu \diffp{w}{t}, \qquad \bm{A}^2 := \bm\kappa.
\end{equation*}
The Hamiltonian functional is quadratic in this variables:
\begin{equation}
\mathcal{H} = \energy{\frac{1}{\mu} (\alpha^1)^2 + (\bm{\mathcal{D}}_b \bm{A}^{2}) \cddot  \bm{A}^{2}}.
\end{equation}
The co-energy variables are given by:
\begin{equation}
{e}^1 := \diffd{\mathcal{H}}{\alpha^1} = \diffp{w}{t}, \qquad \bm{\mathcal{E}}^2 := \diffd{\mathcal{H}}{\bm{A}^{2}} = \bm{M}.
\end{equation}  
In this case we consider uniform boundary conditions, given by the linear and angular velocities. The port Hamiltonian formulation reads (cf.  \cite[page 57]{Brugnoli2020PHD}):
\begin{equation}\label{eq:pHsysKir}
\begin{aligned}
\displaystyle
\diffp{}{t}
\begin{pmatrix}
{\alpha}^1 \\
\bm{A}^2
\end{pmatrix} &= \underbrace{
\begin{bmatrix}
0 & -\div\Div \\
\Grad\grad & \bm{0} \\
\end{bmatrix}}_{J}
\begin{pmatrix}
{e}^1 \\
\bm{E}^2
\end{pmatrix}, \vspace{3pt}\\
\begin{pmatrix}
	\bm{u}^1 \\
	\bm{u}^2
	\end{pmatrix}
 &= \underbrace{
	\begin{bmatrix}
	\begin{bmatrix}
	\gamma_{0} \\
	\gamma_{1}
	\end{bmatrix} & \bm{0} \\
	0 & \bm{0}
	\end{bmatrix}}_{G} \begin{pmatrix}
{e}^1 \\
\bm{E}^2
\end{pmatrix},
\end{aligned}
\end{equation}
where $\gamma_{0}$ denotes the trace operator over the boundary and $\gamma_1$ denotes the normal derivative trace, i.e. $\gamma_1 e^1 = \partial_{\bm{n}} e^1\vert_{\partial\Omega}$.  For this case the spaces and operators are as follows:
\begin{equation}
\begin{aligned}
    {L} &= \Grad\grad, \\
    K &= \div\Div,
\end{aligned} \qquad
\begin{aligned}
    \mathcal{X}^1 &= L^2(\Omega), \\
    \mathcal{X}^2 &= L^2(\Omega, \bbR^{2\times 2}_{\text{sym}}),
\end{aligned} \qquad
\begin{aligned}
    \mathcal{Z}^1 &= H^{2}(\Omega), \\
    \mathcal{Z}^2 &= H^{\div\Div}  (\Omega, \bbR^{2\times 2}_{\text{sym}}).
\end{aligned}
\end{equation}
where $\Grad\grad$ corresponds to the Hessian and the following Sobolev spaces have been introduced:
\begin{equation}
\begin{aligned}
    H^{2}(\Omega) &= \{v \in L^2(\Omega) \vert \; \Grad\grad \bm{v} \in L^2(\Omega, \bbR^{2\times 2}_{\text{sym}})\}, \\
    H^{\div\Div}(\Omega, \bbR^{2\times 2}_{\text{sym}}) &= \{\bm{V} \in L^2(\Omega,\bbR^{2\times 2}_{\text{sym}}) \vert \; \div\Div \bm{V} \in L^2(\Omega)\}.
\end{aligned}
\end{equation}
This operator $\div\Div: L^2(\Omega, \bbR^{2\times 2}_{\text{sym}}) \rightarrow L^2(\Omega, \bbR^3)$ is a closed densely defined operator with domain $H^{\div\Div}  (\Omega, \bbR^{2\times 2}_{\text{sym}})$, while $\Grad\grad: L^2(\Omega) \rightarrow L^2(\Omega, \bbR^{2\times 2}_{\text{sym}})$ is a closed densely defined operator with domain $H^{2}(\Omega)$. This is known from the more general fact the Hessian (i.e. the Gradgrad operator) and divDiv operators are part of the Hessian Hilbert complexes and its corresponding adjoint complex, the divDiv complex, respectively \cite{Arnold2021,Pauly2020,Pauly2022}. The Hessian complex in 2 dimension reads:
\begin{equation*}
   \mathring{H}^{2}(\Omega)\xrightarrow{\Grad\grad{}} \mathring{H}^{\Curl}(\Omega, \bbR^{2\times 2}_{\text{sym}})\xrightarrow{\Curl} L^2(\Omega, \bbR^2), \end{equation*}
where the homogeneous boundary conditions are defined within each Sobolev space. The corresponding dual domain complex (the divDiv complex) is given by:
\begin{equation*}
   L^2(\Omega) \xleftarrow{\div\Div{}} {H}^{\div\Div}(\Omega, \bbR^{2\times 2}_{\text{sym}}) \xleftarrow{\text{sym}\Curl} {H}^{1}(\Omega, \bbR^{2}).
\end{equation*}

Since the problem is of second differential order, the input boundary space consists of a cartesian product:
\begin{equation}
\begin{aligned}
    \gamma^1 &= \begin{bmatrix}
    \gamma_0 \\ \gamma_1
    \end{bmatrix}, \qquad 
      \mathcal{U}^1 = H^{3/2}(\partial\Omega) \times H^{1/2}(\partial\Omega), \\
    \gamma^2 &= \bm{0}, \qquad \qquad \mathcal{U}^2 = \{0\}.
     \end{aligned}
\end{equation}
The space $H^{3/2}(\partial\Omega)$ is taken to be the space of traces of functions belonging to $H^2(\Omega)$. The normal derivative trace can be extended as a linear continuous surjective mapping \cite[Th. 3.6.6]{Tucsnak2009}:
\begin{equation}
  \partial_{\bm{n}} : H^2(\Omega) \rightarrow H^{1/2}(\partial\Omega).
\end{equation}
As a consequence, the $G$ operator is surjective. Furthermore the kernel of $\gamma$ corresponds to the space:
\begin{equation}
    \ker\begin{bmatrix}
    \gamma_0 \\ \gamma_1
    \end{bmatrix} = H^2_0(\Omega) := \{v \in H^2 \; \mid \; v\vert_{\partial\Omega}= \partial_{\bm{n}} v\vert_{\partial\Omega} =0 \}
\end{equation}
which is dense in $L^2(\Omega)$, see \cite[Def. 13.4.6 and Prop. 3.6.7.]{Tucsnak2009}. {By assumption, the following Green formula holds:
\begin{equation}\label{eq:greenKirchhoff}
\begin{aligned}
\left( \Grad\grad e^1, \bm{E}^2 \right)_{L^2(\Omega, \bbR^{2\times 2}_{\text{sym}})}
&= \left( e^1, \div\Div \bm{E}^2 \right)_{L^2(\Omega)} \\
&+ \left\langle \gamma_0 \bm{e}^1, \gamma_{1, nn} \bm{E}^2 \right\rangle_{H^{3/2}(\partial\Omega),H^{-3/2}(\partial\Omega)} + \left\langle \gamma_1 {e}^1, {\gamma}_{0, nn} \bm{E}^2 \right\rangle_{H^{1/2}(\partial\Omega),H^{-1/2}(\partial\Omega)}.
\end{aligned}
\end{equation}
This Green formula is also reported in \cite[Th. 2.2.]{Amara2002} for $C^\infty(\Omega, \bbR^{2\times 2}_{\text{sym}})$ tensor fields and $H^2(\Omega)$ vector fields. The observation operator:
\begin{equation*}
    C^2\bm{E}^2 =\begin{bmatrix}
    \gamma_{1, nn} \\
    \gamma_{0, nn}
    \end{bmatrix}\bm{E}^2 =
    \begin{bmatrix}
    -\bm{n} \cdot \Div \bm{E}^2 - \partial_{\bm{t}} (\bm{n}^\top \bm{E}^2 \bm{t})\\
    \bm{n}^\top \bm{E^2}\bm{n}
    \end{bmatrix}
\end{equation*}
correspond to \textit{effective shear force} and \textit{bending momentum} definition, with $\bm{t}$ the unit tangent vector to the boundary. The duality boundary product is here given both by the duality product between $H^{1/2}(\partial\Omega)$ and $H^{-1/2}(\partial\Omega)$, and between $H^{3/2}(\partial\Omega)$ and $H^{-3/2}(\partial\Omega)$.}

So, by virtue of Theorem \ref{th:BCS2SD}, the Kirchhoff-Love model for this plate defines a Stokes-Dirac structure. 
Finally, defining the multiplicative constitutive operator:
$$
Q := \begin{bmatrix}
\mu^{-1} & \bm{0} \\ 0 & \bm{\mathcal{D}}_b
\end{bmatrix},
$$
{by Theorem \ref{th:well-posed}} the Kirchhoff-Love model for thin plate with Dirichlet and Neumann boundary controls of the vertical displacement is a well-posed linear port-Hamiltonian system, with effective shear force and bending momentum as {collocated} observations at the boundary, by Theorem~\ref{th:well-posed}.

\subsection{Maxwell equations: the $(\curl, \curl)$ case}
\label{s-curl3D}

As last example, we propose the Maxwell equations, already treated in~\cite{VanDerSchaft2014,Weiss2013}.

Let us denote $\bm{E}$ and $\bm{B}$ the eletric and magnetic fields respectively of a domain $\Omega\subset\mathbb{R}^3$, and $\bm{D}$ and $\bm{H}$ the respective auxiliary fields \cite{Monk2003}. The governing system is composed of the Maxwell-Ampère and Maxwell-Faraday {dynamical} equations:
$$
\begin{array}{c}
\displaystyle \frac{\partial \bm{D}}{\partial t} - \curl(\bm{H}) = \bm{J},\\
\displaystyle \frac{\partial \bm{B}}{\partial t} + \curl(\bm{E}) = 0,
\end{array}
$$
where $\bm{J}$ is the free current density. Following \cite{VanDerSchaft2002}, we do not consider the two static equations explicitely, namely Maxwell-Gau\ss{} $\div(\bm{D}) = \rho$ in presence of a charge density, or Maxwell-flux $\div(\bm{B}) = 0$.

The total electromagnetic energy is given in terms of the energy variables $\bm{D}$ and $\bm{B}$:
$$
\mathcal{H}(\bm{D}, \bm{B}) = \frac{1}{2} \int_\Omega \left( \frac{\left\|\bm{D}\right\|^2}{\epsilon} + \frac{\left\|\bm{B}\right\|^2}{\mu} \right) {\rm d}\Omega,
$$
where $\epsilon$ is the electric permittivity and $\mu$ the magnetic permeability.

The co-energy variables are then:
$$
\bm{E} := \frac{\delta \mathcal{H}}{\delta \bm{D}} = \frac{1}{\epsilon} \bm{D}, \qquad \bm{H} := \frac{\delta \mathcal{H}}{\delta \bm{B}} = \frac{1}{\mu} \bm{B}.
$$

Thanks to the {following} Green's formula, see e.g. \cite[Theorem~3.31]{Monk2003}:
\begin{equation}\label{eq:Green-curl}
\int_\Omega \bm{U} \cdot \curl(\bm{V}) {\rm d}\Omega = \int_\Omega \bm{V} \cdot \curl(\bm{U}) {\rm d}\Omega - \int_{\partial\Omega} \gamma_0(\bm{U} \wedge \bm{V}) \cdot \bm{n},
\end{equation}
where $\wedge$ denotes the vector product in $\bbR^3$, the electro-magnetic power is computed as:
$$
\frac{{\rm d} \mathcal{H}}{{\rm d} t} = - \int_{\partial\Omega} \bm{\Pi} \cdot \bm{n} - \int_\Omega \bm{E} \cdot \bm{J},
$$
where $\bm{\Pi} := \gamma_0\left( \bm{E} \wedge \bm{H} \right)$ is known as the Poynting vector. Using Ohm's law $\bm{J} = \eta^{-1} \bm{E}$, $\eta$ being the resistivity, the second term of the power balance is negative: $- \int_\Omega \bm{E} \cdot \bm{J} = - \int_\Omega \eta^{-1} \|\bm{E}\|^2 \le 0$. This is actually Joule's effect, {which corresponds to} loss of energy in the thermal domain \cite{Vu2016}.

Regarding the boundary control, we again avoid mixed boundary condition for the sake of simplicity, because of the difficulty lying in the determination of the boundary functional spaces. Let us choose to control the twisted tangential trace of the magnetic field $\bm{u} = \gamma_t(\bm{H}) := \bm{n} \wedge \gamma_0 (\bm{H})$ and to observe the tangential trace of the electric field $\bm{y} = \gamma_T (\bm{E}) := \left( \bm{n} \wedge \gamma_0 (\bm{E}) \right) \wedge \bm{n}$. One may indeed verify that $\bm{u} \cdot \bm{y} = \bm{\Pi} \cdot \bm{n}$.

To summarize, the port-Hamiltonian formulation reads:
$$
\begin{pmatrix}
\partial_t \bm{D} \\ \partial_t \bm{B} \\ \bm{f}_{\bm{J}}
\end{pmatrix}
=
\begin{bmatrix}
\bm{0} & \curl & - I \\ - \curl & \bm{0} & \bm{0} \\ I & \bm{0} & \bm{0}
\end{bmatrix}
\begin{pmatrix}
\bm{E} \\ \bm{H} \\ \bm{e}_{\bm{J}}
\end{pmatrix},
\qquad
\left\{\begin{array}{rcl}
\bm{u} &=& \gamma_t(\bm{H}),\\
\bm{y} &=& \gamma_T(\bm{E}), 
\end{array}\right.
$$
together with the constitutive relations:
$$
\left\{
\begin{array}{rcl}
\bm{E} &=& \epsilon^{-1} \bm{D}, \\
\bm{H} &=& \mu^{-1} \bm{B}, \\
\bm{e}_{\bm{J}} &=& \eta^{-1} \bm{f}_{\bm J}.
\end{array}\right.
$$

Using again the de Rham complex~\eqref{eq:deRham-usual}, one gets closed and densely-defined $L$ and $K$ operators by setting:
$$
\begin{array}{c}
L = \begin{bmatrix} -\curl \\ I \end{bmatrix}, \qquad \mathcal{X}^1 = L^2(\Omega; \mathbb{R}^3), \qquad \mathcal{Z}^1 = H^{\curl}(\Omega; \mathbb{R}^3), \\
K = \begin{bmatrix} \curl & -I \end{bmatrix}, \qquad \mathcal{X}^2 = L^2(\Omega; \mathbb{R}^3) \times L^2(\Omega; \mathbb{R}^3), \qquad \mathcal{Z}^2 = H^{\curl}(\Omega; \mathbb{R}^3) \times L^2(\Omega; \mathbb{R}^3)
.
\end{array}
$$
and
$$
\gamma^1 = \bm{0},
\qquad
\gamma^2 = \begin{bmatrix}
 \gamma_t & \bm{0} \\ \bm{0} & \bm{0}
\end{bmatrix},
\qquad
\mathcal{U}^1 = \{ 0 \},
\qquad
\mathcal{U}^2 = Y(\partial\Omega) \times \{ 0 \},
$$
where:
$$
Y(\partial\Omega) := \left\{ \bm{v} \in H^{-\frac{1}{2}}(\partial\Omega;\bbR^3) \, \mid \, \exists \bm{u} \in H^{\curl}(\Omega;\mathbb{R}^3), \gamma_t(\bm{u}) = \bm{v} \right\}.
$$
We refer to \cite[Chapter~3.]{Monk2003} for more details on $Y(\partial\Omega)$. For our purpose, we only use \cite[Theorem~3.31]{Monk2003}, stating that $G$ is surjective. Furthermore, $\ker\gamma_t = \overline{\mathcal{C}^\infty_0(\Omega;\mathbb{R}^3)}$, the closure being taken in $H^{\curl}(\Omega; \mathbb{R}^3)$, from \cite[Theorem~3.33]{Monk2003}. In particular, the kernel contains $\mathcal{C}^\infty_0(\Omega;\mathbb{R}^3)$, which is dense in $L^2(\Omega;\mathbb{R}^3)$. Hence the kernels of $\gamma^i$, $i=1,2$, are dense in $\mathcal{X}^i$, $i=1,2$, respectively.

{From~\eqref{eq:Green-curl}, Theorem~\ref{th:BCS2SD} applies:} the electro-magnetic problem with {twisted} tangential control of the magnetic field generates a Stokes-Dirac structure on the bond space $\mathcal{B} = \mathcal{E} \times \mathcal{F}$, with:
$$
{\mathcal{E} = \begin{bmatrix} I_{\mathcal{V}} \\ G \end{bmatrix} \mathcal{V},
\qquad
\mathcal{V} := H^{\curl}(\Omega;\mathbb{R}^3) \times H^{\curl}(\Omega;\mathbb{R}^3) \times L^2(\Omega;\mathbb{R}^3),
\qquad
\mathcal{F} = \mathcal{E}'.}
$$

\begin{remark}
Note that $\mathcal{Z}^i$, $i=1,2$ are not identifiable with the $\mathcal{Z}$ and $\mathcal{R}$ spaces that define the ports formulation used {\it e.g.} in Proposition~\ref{prop:power-balance-dim-inf}. Indeed, the former spaces have been used to prove that $\mathcal{J}$ generates a Stokes-Dirac structure on $\mathcal{B}$, while the latter consider the dynamics of the energy variables $\bm{D}$ and $\bm{B}$.

The splitting of {$\mathcal{V}$} is then chosen accordingly to either the algebraic point of view (considering flows and efforts), or {the dynamical systems} point of view (considering the energy and co-energy variables together with the resistive port). In this example, we either consider:
$$
{\mathcal{V} = \underbrace{H^{\curl}(\Omega;\mathbb{R}^3)}_{\mathcal{Z}^1} \times \underbrace{H^{\curl}(\Omega;\mathbb{R}^3) \times L^2(\Omega;\mathbb{R}^3)}_{\mathcal{Z}^2},}
$$
for the algebraic point of view, or:
$$
{\mathcal{V} = \underbrace{H^{\curl}(\Omega;\mathbb{R}^3) \times H^{\curl}(\Omega;\mathbb{R}^3)}_{\mathcal{Z}} \times \underbrace{L^2(\Omega;\mathbb{R}^3)}_{\mathcal{R}},}
$$
for the dynamical {systems} point of view.
\end{remark}

Note that since $\mathcal{R} \neq \emptyset$, Theorem~\ref{th:well-posed} can not apply {directly} to this example, further work is needed to conclude to the well-posedness of this \emph{constrained} system, as soon as $\eta^{-1} \not\equiv 0$ or $\not\equiv \infty$ (at the physical level,  $\eta^{-1} =0$ models a perfect insulator, whereas  $\eta^{-1} =\infty$ models a perfect conductor). However, this system has already been proved to be well-posed for many kinds of boundary controls. See \textit{e.g.}~\cite{Weiss2013} and references therein.

It is furthermore possible to define the \emph{linear} multiplicative operator $Q := \begin{bmatrix} \epsilon^{-1} & \bm{0} \\ \bm{0} & \mu^{-1} \end{bmatrix}$ relating energy and co-energy variables, as well as the \emph{linear} multiplicative operator $S := \eta^{-1}$ accounting for the resistive constitutive law $\mathcal{S} \subset \mathcal{R} \times \mathcal{R}$, defined by Ohm's law: $\bm{e}_{\bm{J}} = S \bm{f}_{\bm{J}}$.

\subsection{Summary of the examples}

Let us summarize the above examples in Table~\ref{tabrecap}.
\begin{table}[htb]
\centering
\begin{tabular}{|c||c|c|c|c|}
  \hline
  \S. & Wave & Elasticity & Kirchoff-Love & Maxwell \\ \hline
  $L$ & $-\grad$ & $-\Grad$ & $\Grad\grad$ & $\begin{bmatrix} -\curl \\ I \end{bmatrix}$ \\
  $K$ & $\div$ & $\Div$ & $\div\Div$ & $\begin{bmatrix} \curl & -I \end{bmatrix}$ \\
  $\mathcal{Z}^1$ & $H^1(\Omega;\mathbb{R})$ & $H^1(\Omega;\mathbb{R}^3)$ & $H^2(\Omega)$ & $H^{\curl}(\Omega; \mathbb{R}^3)$ \\
  $\mathcal{X}^1$ & $L^2(\Omega;\mathbb{R})$ & $L^2(\Omega;\mathbb{R}^3)$ & $L^2(\Omega)$ & $L^2(\Omega;\mathbb{R}^3)$ \\
  $\mathcal{Z}^2$ & $H^{\div}(\Omega;\mathbb{R}^3)$ & $H^{\Div}(\Omega;\mathbb{R}^3)$ & $H^{\div\Div}(\Omega;\mathbb{R}^{2\times 2}_{\text{sym}})$ & $H^{\curl}(\Omega; \mathbb{R}^3) \times L^2(\Omega; \mathbb{R}^3)$ \\
  $\mathcal{X}^2$ & $L^2(\Omega;\mathbb{R}^3)$ & $L^2(\Omega;\mathbb{R}^{3\times 3}_{\text{sym}})$ & $L^2(\Omega;\mathbb{R}^{2\times 2}_{\text{sym}})$ & $L^2(\Omega;\mathbb{R}^3)\times L^2(\Omega;\mathbb{R}^3)$ \\
  $\gamma^1$ & $\gamma_0$ & $\bm{0}$ & $\begin{bmatrix} \gamma_0 \\ \gamma_1  \end{bmatrix} $ & $\bm{0}$ \\
  $\mathcal{U}^1$ & $H^\frac{1}{2}(\partial\Omega;\mathbb{R})$ & $\{0\}$ & $H^{3/2}(\partial\Omega) \times H^{1/2}(\partial\Omega)$ & $\{0\}$ \\
  $C^1$ & $\{0\}$ & $\bm{\gamma}_0$ & $\begin{bmatrix} 0 \\ 0  \end{bmatrix}$& $\bm{\gamma}_T$ \\
  $(\mathcal{U}^2)'$ & $\{ 0 \}$ & $H^{\frac{1}{2}}(\partial\Omega;\mathbb{R}^3)$ & $\{0\}$ & $Y'(\partial\Omega) \times \{0\}$ \\
  $\gamma^2$ & $\bm{0}$ & $\bm\gamma_{\perp}$ & $\bm{0}$ & $\begin{bmatrix} \gamma_t & \bm{0} \\ \bm{0} & \bm{0} \end{bmatrix}$ \\
  $\mathcal{U}^2$ & $\{0\}$ & $H^{-\frac{1}{2}}(\partial\Omega;\mathbb{R}^3)$ & $\{0\}$ & $Y(\partial\Omega) \times \{0\}$\\
  $C^2$ & $\gamma_\perp$ & ${0}$& $\begin{bmatrix} \gamma_{1, nn} \\ \gamma_{0, nn}  \end{bmatrix}$ & $\begin{bmatrix} \bm{0} & \bm{0} \\ \bm{0} & \bm{0} \end{bmatrix}$\\
  $(\mathcal{U}^1)'$ & $H^{-\frac{1}{2}}(\partial\Omega;\mathbb{R})$ & $\{0\}$ & $H^{-3/2}(\partial\Omega) \times H^{-1/2}(\partial\Omega)$ & $\{0\}$ \\
  $Q$ & $\begin{bmatrix} \rho^{-1} & \bm{0} \\ 0 & \bm{T} \end{bmatrix}$ & $\begin{bmatrix} \rho^{-1} & \bm{0} \\ 0 & \bm{\mathcal{D}} \end{bmatrix}$ & $\begin{bmatrix} \rho^{-1} & \bm{0} \\ 0 & \bm{\mathcal{D}}_b \end{bmatrix}$ & $\begin{bmatrix} \epsilon^{-1} & \bm{0} \\ 0 & \mu^{-1} \end{bmatrix}$ \\
  \hline
\end{tabular}
\caption{Examples of linear port-Hamiltonian system.}
\label{tabrecap}
\end{table}

\section{Perspectives and Conclusion}\label{sec:conclusion}

The proposed construction shows how linear wave-like systems can be associated to the Stokes-Dirac geometric structure via the theory of boundary control systems. In particular, linear elastodynamic problems fit into this framework. This extends the canonical Stokes-Dirac structure defined in \cite{VanDerSchaft2002}, that includes the case of scalar and electromagnetic waves only. This allows to deduce well-posedness of the considered class of port-Hamiltonian systems. \\

The correct specification of the functional analytic framework is crucial for discretization purposes. For instance, in a finite element context discrete spaces for the variables are chosen in suitable subspaces of the infinite-dimensional functional spaces. The assumed structure of the systems under consideration can be readily discretized using mixed finite element strategies.  In particular, the employment of mixed finite elements for port-Hamiltonian systems has been explored in \cite{cardoso2021pfem}, where it is shown how several systems, linear and non-linear, can be structurally discretized via finite elements. A complete proof of convergence of mixed finite elements with boundary control is presented in \cite{haine2022conv}, where it is proven that several families of finite elements, beyond those satisfying a de Rham subcomplex property, lead to convergence. A geometric viewpoint is instead presented in \cite{brugnoli2022df}, where the discretization mimetically represents the continuous weak formulation via finite element differential forms that constitute a de Rham subcomplex. This approach leads to a primal-dual formulation, capable of retaining the power balance at the discrete level. Finite element exterior calculus provides a framework for the discretization of partial differential equations, unifying concepts from topology, geometry and algebra. For this reason a natural extension of this work would consist in combining the functional analytic setting with the geometric one. \\

The inclusion of unbounded dissipation operators, that occur in e.g. Rayleigh damping, represents another important development of the present work. Furthermore, extending the presented analysis to non linear constitutive relations is also of fundamental for applications. In this case, convexity of the Hamiltonian constitutes a fundamental hypothesis to establish well-posedness.

\nocite{*}

\bibliographystyle{plain}
\bibliography{mybiblio}

\appendix

\section{Backgrounds on boundary control systems}

Let us start by the definition of a boundary control system, as given in~\cite[Chapter~10]{Tucsnak2009}.

\begin{definition}[Boundary Control Systems]\label{def:BCS}
Let $\mathcal{Z}, \mathcal{X}, \mathcal{U}$ be three complex Hilbert spaces, such that $\mathcal{Z} \subset \mathcal{X}$ with continuous embedding.

Let $J \in \mathcal{L} (\mathcal{Z}, \mathcal{X})$ and $G \in \mathcal{L} (\mathcal{Z}, \mathcal{U})$ be two linear operators.

The couple $(J, G)$ is a boundary control system on $(\mathcal{Z}, \mathcal{X}, \mathcal{U})$ if:
\begin{itemize}
\item[(i)]
$G$ is onto;
\item[(ii)]
$\ker G$ is dense in $\mathcal{X}$;
\end{itemize}
and there exists $\beta \in \bbC$ such that:
\begin{itemize}
\item[(iii)]
$\beta I - J$ restricted to $\ker G$ is onto;
\item[(iv)]
$\ker (\beta I - J) \cap \ker G = \{ 0 \}$.
\end{itemize}
$\mathcal{Z}$ is called the \emph{solution space}, $\mathcal{X}$ the \emph{state space} and $\mathcal{U}$ the \emph{input space}.
\end{definition}

The following Proposition~\ref{prop:BCS} gathers well-known results. Proofs can be found, {\it e.g.}, in~\cite[Chapter~10]{Tucsnak2009} and references therein.

\begin{proposition}\label{prop:BCS}
Let $(J,G)$ be a boundary control system on $(\mathcal{Z}, \mathcal{X}, \mathcal{U})$.

Denote $\mathcal{X}_1 := \ker G$, $A := J|_{\mathcal{X}_1}$, and $\mathcal{X}_{-1}$ the completion of $\mathcal{X}$ endowed with the norm $\left\| (\beta I - A)^{-1} \cdot \right\|_\mathcal{X}$ for some fixed $\beta \in \rho(A)$. Then:
\begin{enumerate}
\item
$\mathcal{X}_1$ is a Hilbert space endowed with the graph norm of $A$, and a continuously embedded closed subspace of $\mathcal{Z}$ (generally not densely embedded);
\item
$A \in \mathcal{L}(\mathcal{X}_1, \mathcal{X})$ and can be continuously extended to an operator $A|_{\mathcal{X}}$ in $\mathcal{L}(\mathcal{X}, \mathcal{X}_{-1})$. Furthermore, if $A$ is skew-adjoint on $\mathcal{X}$, then $A|_{\mathcal{X}}$ is skew-adjoint on $\mathcal{X}_{-1}$;
\item
for $\beta \in \bbC$ as in Definition~\ref{def:BCS}, $\beta \in \rho(A)$, the resolvent set of $A$, and $(\beta I - A)^{-1} \in \mathcal{L}(\mathcal{X},\mathcal{X}_1)$, $(\beta I - A|_{\mathcal{X}})^{-1} \in \mathcal{L}(\mathcal{X}_{-1},\mathcal{X})$.

Furthermore, the graph norm of $A$ on $\mathcal{X}_1$ is equivalent to the norm $\| (\beta I-A) \cdot \|_{\mathcal{X}}$;
\item
there exists a unique control operator $B \in \mathcal{L}(\mathcal{U}, \mathcal{X}_{-1})$ such that:
$$
J = A|_{\mathcal{X}} + BG, \qquad G (\beta I - A|_{\mathcal{X}})^{-1} B = I_\mathcal{U},
$$
furthermore, the operator $\begin{bmatrix} I_{\mathcal{Z}} \\ G \end{bmatrix}$ is a bounded bijection between $\mathcal{Z}$ and $\left\{ \begin{pmatrix} z \\ u \end{pmatrix} \in \mathcal{X} \times \mathcal{U} \; \mid \; A|_{\mathcal{X}} z + B u \in \mathcal{X} \right\}$;
\item
$\mathcal{Z} = (\beta I - A|_{\mathcal{X}})^{-1} \left( \mathcal{X} + B \mathcal{U} \right) = \mathcal{X}_1 + (\beta I - A|_{\mathcal{X}})^{-1} B \mathcal{U}$, and $B$ is strictly unbounded, meaning that $\mathcal{X} \cap B \mathcal{U} = \left\{ 0 \right\}$, and bounded from below. In particular, for all $z\in \mathcal{Z}$, there exists a unique $z_0 \in \mathcal{X}_1$ and a unique $u \in \mathcal{U}$ such that $z = z_0 + ( \beta I - A|_{\mathcal{X}})^{-1} B u$;
\end{enumerate}
\end{proposition}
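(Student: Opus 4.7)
The plan is to prove each item in the order stated, since later items rest on earlier ones. Since the statement is a compendium of well-known facts about boundary control systems (see, e.g., \cite[Chapter~10]{Tucsnak2009}), the main task is to stitch the five claims together using only hypotheses (i)--(iv) of Definition~\ref{def:BCS}. For item~1, I first observe that $\mathcal{X}_1 = \ker G$ is closed in $\mathcal{Z}$ by continuity of $G$, hence a Hilbert space under the induced $\mathcal{Z}$-norm. Since $A = J|_{\mathcal{X}_1}$ is the restriction of the bounded operator $J \in \mathcal{L}(\mathcal{Z},\mathcal{X})$ to a closed subspace of $\mathcal{Z}$, $A$ is a closed operator on $\mathcal{X}$ with domain $\mathcal{X}_1$, so $\mathcal{X}_1$ is also complete under the graph norm. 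To complete item~1 and prove item~3 in one shot, I invoke (iii)--(iv): together they assert that $\beta I - A : \mathcal{X}_1 \to \mathcal{X}$ is a bijection. The bounded inverse theorem then yields $(\beta I - A)^{-1} \in \mathcal{L}(\mathcal{X},\mathcal{X}_1)$, the norm equivalence between the graph norm and $\|(\beta I - A)\cdot\|_\mathcal{X}$, and finally the continuous embedding $\mathcal{X}_1 \hookrightarrow \mathcal{Z}$ via $z = (\beta I - A)^{-1}(\beta I - J)z$.

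For item~2, $A \in \mathcal{L}(\mathcal{X}_1,\mathcal{X})$ is immediate once item~1 is known. The extension $A|_{\mathcal{X}} \in \mathcal{L}(\mathcal{X},\mathcal{X}_{-1})$ is obtained from the rigged structure $\mathcal{X}_1 \hookrightarrow \mathcal{X} \hookrightarrow \mathcal{X}_{-1}$: by the very definition of $\mathcal{X}_{-1}$, the resolvent $(\beta I - A)^{-1}$ extends to an isometric isomorphism $(\beta I - A|_\mathcal{X})^{-1} : \mathcal{X}_{-1} \to \mathcal{X}$, and $A|_\mathcal{X}$ is defined as $\beta I$ minus the inverse of this extension. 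In the skew-adjoint case, I would use the identification $\mathcal{X}_{-1} \simeq (\mathcal{X}_1)'$ induced by the pivot space $\mathcal{X}$ and transpose $A^\star = -A$ to conclude that $A|_\mathcal{X}$ is skew-adjoint on $\mathcal{X}_{-1}$ with domain $\mathcal{X}$.

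The core of the argument is item~4. Surjectivity of $G \in \mathcal{L}(\mathcal{Z},\mathcal{U})$ yields, via the open mapping theorem, a bounded right inverse $R \in \mathcal{L}(\mathcal{U},\mathcal{Z})$ with $GR = I_\mathcal{U}$, giving the topological decomposition $\mathcal{Z} = \mathcal{X}_1 \oplus R\mathcal{U}$. I would then set
\begin{equation*}
B u := J R u - A|_\mathcal{X} R u, \qquad u \in \mathcal{U},
\end{equation*}
which lies in $\mathcal{X}_{-1}$ since $J R u \in \mathcal{X} \subset \mathcal{X}_{-1}$ and $A|_\mathcal{X} R u \in \mathcal{X}_{-1}$, and which is bounded by composition. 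The identity $J = A|_\mathcal{X} + BG$ holds on $\mathcal{X}_1$ trivially (both $BG$ and the difference $A|_\mathcal{X} - A$ vanish there) and on $R\mathcal{U}$ by construction, hence on all of $\mathcal{Z}$. Independence from the choice of $R$ follows because two right inverses of $G$ differ by an operator taking values in $\ker G = \mathcal{X}_1$, on which $J$ and $A|_\mathcal{X}$ agree. The normalisation $G(\beta I - A|_\mathcal{X})^{-1}B = I_\mathcal{U}$ is verified by applying $G$ to the identity $R u = (\beta I - A|_\mathcal{X})^{-1}[(\beta I - J) R u + B u]$ and using that $(\beta I - A)^{-1}$ takes values in $\ker G$. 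Uniqueness of $B$ is immediate from the surjectivity of $G$, and bijectivity of $\begin{bmatrix} I_\mathcal{Z} \\ G \end{bmatrix}$ onto the stated set follows from combining the decomposition with $J = A|_\mathcal{X} + BG$.

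Finally, item~5 is read off item~4: inverting $(\beta I - A|_\mathcal{X})$ in the identity $(\beta I - J)z = (\beta I - A|_\mathcal{X})z - BGz$ gives
\begin{equation*}
z \;=\; \underbrace{(\beta I - A|_\mathcal{X})^{-1}(\beta I - J)z}_{\in \mathcal{X}_1} \;+\; (\beta I - A|_\mathcal{X})^{-1} B G z,
\end{equation*}
which is the claimed sum decomposition; uniqueness of the summands follows from $\mathcal{X}_1 \cap (\beta I - A|_\mathcal{X})^{-1} B \mathcal{U} = \{0\}$, itself a consequence of $G(\beta I - A|_\mathcal{X})^{-1}B = I_\mathcal{U}$ and $G|_{\mathcal{X}_1} = 0$. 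Strict unboundedness $\mathcal{X} \cap B\mathcal{U} = \{0\}$ then follows: if $Bu \in \mathcal{X}$ for some $u$, then $(\beta I - A|_\mathcal{X})^{-1}Bu \in \mathcal{X}_1$, so $u = G(\beta I - A|_\mathcal{X})^{-1}Bu = 0$. The main technical obstacle throughout is the careful book-keeping among the three spaces $\mathcal{X}_1$, $\mathcal{X}$, $\mathcal{X}_{-1}$ and the ``two versions'' of the generator, $A$ and $A|_\mathcal{X}$: keeping track of where each operator acts, and of the fact that $J$ on $\mathcal{Z}$ is genuinely different from $A|_\mathcal{X}$ restricted to $\mathcal{Z}$, is exactly what makes the identities of items~4 and~5 consistent and is where the proof could easily go astray if one forgets that $A|_\mathcal{X}$ is not defined by the same action as $J$ on the non-zero-$G$ part of $\mathcal{Z}$.
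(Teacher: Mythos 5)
Your argument is correct and is essentially the standard one: the paper offers no proof of Proposition~\ref{prop:BCS}, deferring entirely to \cite[Chapter~10]{Tucsnak2009}, and your construction ($\beta I - A$ bijective from $(\mathcal{X}_1,\|\cdot\|_{\mathcal{Z}})$ onto $\mathcal{X}$ via (iii)--(iv) and the bounded inverse theorem, $B := (J - A|_{\mathcal{X}})R$ for a bounded right inverse $R$ of $G$, and the decomposition $\mathcal{Z} = \mathcal{X}_1 \oplus R\,\mathcal{U}$) is precisely the proof given there. The only point to tidy is the order of the claims in item~1: closedness of $A$ on $\mathcal{X}$ does not follow merely from $J$ being bounded on $\mathcal{Z}$ and $\mathcal{X}_1$ being closed in $\mathcal{Z}$ (a sequence may converge in $\mathcal{X}$ without converging in $\mathcal{Z}$), but it does follow from the resolvent bound $(\beta I - A)^{-1} \in \mathcal{L}(\mathcal{X},\mathcal{X}_1)$ that you establish immediately afterwards, so the proof stands once that step is moved first.
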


\section{Proof of Theorem~\ref{th:BCS2SD}}\label{app:proof-th-BCS2SD}

Let us start by showing that $\left( J, G \right)$ is a boundary control system on $\left( \mathcal{Z}^1\times\mathcal{Z}^2, \mathcal{X}^1\times\mathcal{X}^2, \mathcal{U}^1\times\mathcal{U}^2 \right)$.

The four points of Definition~\ref{def:BCS} have to be checked.

\noindent\textbf{Point~$(i)$:} Since $\gamma^i \mathcal{Z}^i = \mathcal{U}^i$, $i=1, 2$, by assumption $(A1)$, $G (\mathcal{Z}^1\times\mathcal{Z}^2) = \mathcal{U}^1 \times \mathcal{U}^2 =: \mathcal{U}$, {\it i.e.} point~$(i)$ of Definition~\ref{def:BCS} holds.

\noindent\textbf{Point~$(ii)$:} Since $\mathcal{X}_1 := \ker G = \left\lbrace \begin{pmatrix} e^1 \\ e^2 \end{pmatrix} \in \mathcal{Z} \; \mid \; \gamma^1 e^1 = 0, \gamma^2 e^2 = 0 \right\rbrace = \ker \gamma^1 \times \ker \gamma^2 =: \mathcal{X}^1_1 \times \mathcal{X}^2_1$. By assumption $(A2)$, $\mathcal{X}_1$ is then dense in $\mathcal{X}$, and point~$(ii)$ of Definition~\ref{def:BCS} is satisfied.

\noindent\textbf{Point~$(iii)$:} {By assumptions $(A1)$, $(A2)$, and $(A3)$, Theorem~\ref{th:skew-adjoint} applies and $A$ is skew-adjoint on $\mathcal{X}$, so in particular} $(\beta I - A)$ is onto for all $\beta \in \bbC, \Re{\rm e} \beta \neq 0$: the point~$(iii)$ of Definition~\ref{def:BCS} holds.

\noindent\textbf{Point~$(iv)$:} Let $J := \begin{bmatrix} 0 & -K \\ L & 0 \end{bmatrix}$ and $e \in \ker (I - J) \cap \mathcal{X}_1$. Then:
$$
e = A e \in \mathcal{X}_1.
$$
Applying $A^\star = -A$, by Theorem~\ref{th:skew-adjoint}, one gets:
$$
-A e = A^\star A e \in \mathcal{X},
$$
from which it is deduced that:
$$
e = -A^\star A e \in \mathcal{X}.
$$
Multiplying both side by $e$ in $\mathcal{X}$, we obtain $\left\| e \right\|_{\mathcal{X}_1}^2 = 0$. Then $\ker (I - J) \cap \mathcal{X}_1 = \left\lbrace \begin{pmatrix} 0 \\ 0 \end{pmatrix} \right\rbrace$ and point~$(iv)$ of Definition~\ref{def:BCS} holds.

This shows that $\left( J, G \right)$ is indeed a boundary control system on $(\mathcal{Z}^1\times\mathcal{Z}^2, \mathcal{X}^1\times\mathcal{X}^2, \mathcal{U}^1\times\mathcal{U}^2)$. As a first consequence, the control operator $B$ is uniquely determined, as claimed in Proposition~\ref{prop:BCS}, point~4.

\noindent\textbf{Stokes-Dirac structure:} {Starting from~\eqref{eq:formal-adjoint} with the definition of $C := \begin{bmatrix} 0 & \beta^2 \\ \beta^1 & 0 \end{bmatrix}$, one has for all $z := \begin{pmatrix} z^1 \\ z^2 \end{pmatrix} \in \mathcal{Z}$ and all $x := \begin{pmatrix} x^1 \\ x^2 \end{pmatrix} \in \mathcal{Z}$:
$$
\begin{array}{rcl}
\left( J z, x \right)_{\mathcal{X}} + \left( z, J x \right)_{\mathcal{X}}
&=& \left( -K z^2, x^1 \right)_{\mathcal{X}^1} + \left( L z^1, x^2 \right)_{\mathcal{X}^2}
+ \left( z^1, -K x^2 \right)_{\mathcal{X}^1} + \left( z^2, L x^1 \right)_{\mathcal{X}^2}, \\
&=& \left\langle \gamma^1 z^1, \beta^2 x^2 \right\rangle_{\mathcal{U}^1,(\mathcal{U}^1)'} 
+ \left\langle \beta^1 z^1, \gamma^2 x^2 \right\rangle_{(\mathcal{U}^2)',\mathcal{U}^2} \\
&& \qquad \qquad \qquad \qquad + \left\langle \beta^2 z^2, \gamma^1 x^1 \right\rangle_{(\mathcal{U}^1)',\mathcal{U}^1}
+ \left\langle \gamma^2 z^2, \beta^1 x^1 \right\rangle_{\mathcal{U}^2,(\mathcal{U}^2)'} , \\
&=& \left\langle G z, C x \right\rangle_{\mathcal{U},\mathcal{U}'}
+ \left\langle C z, G x \right\rangle_{\mathcal{U}',\mathcal{U}}.
\end{array}
$$
}

From Proposition~\ref{prop:BCS}, point~4, $J = \begin{bmatrix} A|_{\mathcal{X}} & B \end{bmatrix} \begin{bmatrix} I_{\mathcal{Z}} \\ G \end{bmatrix}$, and thus, with the definitions of $\mathcal{F}$ and $\mathcal{E}$, one has for all $\begin{pmatrix}
z \\ u 
\end{pmatrix} \in \mathcal{E}$ {and all $\begin{pmatrix}
x \\ v 
\end{pmatrix} \in \mathcal{E}$}:
$$
\begin{array}{rcl}
\left\langle \begin{bmatrix} A|_{\mathcal{X}} & B \\ -C & 0 \end{bmatrix} \begin{pmatrix}
z \\ u 
\end{pmatrix},
{\begin{pmatrix}
x \\ v 
\end{pmatrix}} \right\rangle_{\mathcal{F},\mathcal{E}}
{+
\left\langle \begin{pmatrix}
z \\ u 
\end{pmatrix},
\begin{bmatrix} A|_{\mathcal{X}} & B \\ -C & 0 \end{bmatrix} \begin{pmatrix}
x \\ v
\end{pmatrix} \right\rangle_{\mathcal{F},\mathcal{E}}
}
&=& \left( J z, {x} \right)_{\mathcal{X}} {+ \left( z, J x \right)_{\mathcal{X}}} - \left\langle C z, {v} \right\rangle_{\mathcal{U}',\mathcal{U}} {- \left\langle u, C x \right\rangle_{\mathcal{U},\mathcal{U}'}}, \\
&=& {\left\langle G z, C x \right\rangle_{\mathcal{U},\mathcal{U}'}
+ \left\langle C z, G x \right\rangle_{\mathcal{U}',\mathcal{U}}} \\
&& \qquad \qquad \qquad
{- \left\langle C z, G x \right\rangle_{\mathcal{U}',\mathcal{U}}
- \left\langle G z, C x \right\rangle_{\mathcal{U},\mathcal{U}'}}, \\
&=& {0}.
\end{array}
$$
This yields that $\mathcal{J} {:=} \begin{bmatrix} A|_{\mathcal{X}} & B \\ -C & 0 \end{bmatrix} \in \mathcal{L}(\mathcal{E},\mathcal{F})$ indeed satisfies~\eqref{eq:skew-sym}. 
Applying Theorem~\ref{th:Stokes-Dirac} shows that the graph of $\mathcal{J}$ {defined as above} is a Stokes-Dirac structure on $\mathcal{B} = \mathcal{F} \times \mathcal{E}$.

\noindent\textbf{Form of $\mathcal{J}$:} Now, it remains to prove that $\mathcal{J} = \begin{bmatrix} A|_{\mathcal{X}} & B \\ -C & 0 \end{bmatrix}$ writes as in~\eqref{eq:extendedInterconnectionOperator}, by showing that indeed:
$$
B = \begin{bmatrix} 0 & B^2 \\ B^1 & 0 \end{bmatrix},
$$
with
$B^1 \in \mathcal{L} (\mathcal{U}^1, \mathcal{X}_{-1}^2)$, 
$B^2 \in \mathcal{L} (\mathcal{U}^2, \mathcal{X}_{-1}^1)$,
where we recall that $\mathcal{X}_{-1}^i$ is the projection of $\mathcal{X}_{-1}$ on the $i$-th component, for $i=1, 2$.

The form of $B$ entirely relies on its construction, given in the proof of \cite[Proposition~10.1.2]{Tucsnak2009}: $B = (J - A) H$ where $H \in \mathcal{L}(\mathcal{U}, \mathcal{Z})$ is a bounded right inverse of $G$ (which exists since $G$ is onto).

Since $G = \begin{bmatrix} \gamma^1 & 0 \\ 0 & \gamma^2 \end{bmatrix}$, $H = \begin{bmatrix} H^1 & 0 \\ 0 & H^2 \end{bmatrix}$, where $H^i \in \mathcal{L}(\mathcal{U}^i,\mathcal{Z}^i)$ is a bounded right inverse of $\gamma^i$, for $i=1, 2$. By construction with the operators $K$ and $L$ {and the assumption of density of $\mathcal{X}_1$ in $\mathcal{X}$}, $J-A|_{\mathcal{X}} = BG$ is of the form $\begin{bmatrix} 0 & S^2 \\ S^1 & 0 \end{bmatrix}$, which yields  $B = \begin{bmatrix} 0 & S^2 H^2 \\ S^1 H^1 & 0 \end{bmatrix} \in \mathcal{L}(\mathcal{U}^1\times\mathcal{U}^2, \mathcal{X}_{-1}^1\times\mathcal{X}_{-1}^2)$. Hence $B^1 = S^1 H^1$ is related to $\gamma^1$, and $B^2 = S^2 H^2$ is related to $\gamma^2$. {This concludes the proof.}

\end{document}